\newcommand\scalemath[2]{\scalebox{#1}{\mbox{\ensuremath{\displaystyle #2}}}}
\newcommand*\bb{\mathbb}
\newcommand *\w{^\wedge}
\newcommand*\de{\partial}
\newcommand{\vo}{\vec{o}\@ifnextchar{^}{\,}{}}
\def\YYint#1#2#3{{\setbox0=\hbox{$#1{#2#3}{\iint}$}
    \vcenter{\hbox{$#2#3$}}\kern-.50\wd0}}
\def\XXint#1#2#3{{\setbox0=\hbox{$#1{#2#3}{\int}$}
    \vcenter{\hbox{$#2#3$}}\kern-.50\wd0}}
\def\namedlabel#1#2{\begingroup
   \def\@currentlabel{#2}%
   \label{#1}\endgroup
}
\newcommand{\rmh}[1]{\mathpalette{\raisem@th{#1}}}
\newcommand{\raisem@th}[3]{\hspace*{-1pt}\raisebox{#1}{$#2#3$}}
\newcommand{\descref}[2]{\hyperref[#1]{\textnormal{\textcolor{black}{(}\textcolor{blue}{\bf #2}\textcolor{black}{)}}}}
\newcommand{\dref}[2]{\hyperref[#1]{\textcolor{black}{(}\textcolor{blue}{\bf #2}\textcolor{black}{)}}}
\newcommand\RR{\mathbb{R}}
\newcommand{\ve}{\varepsilon}
\newcommand{\Om}{\Omega}
\newcommand{\iprod}[2]{\langle #1 \ ,  #2\rangle}
\g@addto@macro\normalsize{%
  \setlength\abovedisplayskip{2pt}
  \setlength\belowdisplayskip{2pt}
  \setlength\abovedisplayshortskip{4pt}
  \setlength\belowdisplayshortskip{4pt}
}
\numberwithin{equation}{section}
\crefname{section}{Section}{Sections}
\crefname{subsection}{Subsection}{Subsections}
\crefname{condition}{Condition}{Conditions}
\crefname{hypothesis}{Hypothesis}{Hypothesis}
\crefname{assumption}{Assumption}{Assumptions}
\crefname{lemma}{Lemma}{Lemmas}
\crefname{claim}{Claim}{Claims}
\crefname{remark}{Remark}{Remarks}
\newlist{steps}{enumerate}{1}
\setlist[steps, 1]{label = \textcolor{Cerulean}{Step \arabic*:}}
\def\ps@pprintTitle{%
	\let\@oddhead\@empty
	\let\@evenhead\@empty
	\def\@oddfoot{}%
	\let\@evenfoot\@oddfoot}
\DeclarePairedDelimiterX{\inp}[2]{\langle}{\rangle}{#1, #2}
\newcommand{\norm}[1]{\left\lVert#1\right\rVert}
\newtheorem{theorem}{Theorem}[section]
\newtheorem{lemma}[theorem]{Lemma}
\newtheorem{corollary}[theorem]{Corollary}
\newtheorem{proposition}[theorem]{Proposition}
\newtheorem{definition}[theorem]{Definition}
\newtheorem{remark}[theorem]{Remark}        
\numberwithin{equation}{section}
\begin{document}

\begin{frontmatter}
\title{Existence of variational solutions to nonlocal evolution equations via convex minimization}
\author[myaddress]{Harsh Prasad}
\ead{harsh@tifrbng.res.in}

\author[myaddress]{Vivek Tewary\corref{mycorrespondingauthor}}
\ead{vivektewary@gmail.com and vivek2020@tifrbng.res.in}


\address[myaddress]{Tata Institute of Fundamental Research, Centre for Applicable Mathematics, Bangalore, Karnataka, 560065, India}
\cortext[mycorrespondingauthor]{Corresponding author}
\begin{abstract}
    We prove existence of variational solutions for a class of nonlocal evolution equations whose prototype is the double phase equation 
    		    \begin{align*}
        \de_t u + P.V.\int_{\mathbb{R}^N}& \frac{|u(x,t)-u(y,t)|^{p-2}(u(x,t)-u(y,t))}{|x-y|^{N+ps}}\\
        &+a(x,y)\frac{|u(x,t)-u(y,t)|^{q-2}(u(x,t)-u(y,t))}{|x-y|^{N+qs}} \,dy = 0.
   		    \end{align*}	 
   	 
    The approach of minimization of parameter-dependent convex functionals over space-time trajectories requires only appropriate convexity and coercivity assumptions on the nonlocal operator. As the parameter tends to zero, we recover variational solutions. Under further growth conditions, these variational solutions are global weak solutions. Further, this provides a direct minimization approach to approximation of nonlocal evolution equations.
\end{abstract}
    \begin{keyword} Nonlocal operators with nonstandard growth, Elliptic regularization, Parabolic systems; Parabolic minimizers; Evolutionary variational solutions
    \MSC[2010] 35K51, 35A01, 35A15, 35R11.
    \end{keyword}

\end{frontmatter}
\begin{singlespace}
\tableofcontents
\end{singlespace}

\section{Introduction}

\subsection{The problem}
In \cite{bogeleinExistenceEvolutionaryVariational2014}, B\"ogelein et al prove existence of variational solutions for parabolic systems of the form
\begin{align*}
\de_t u -\text{div}(D_\xi f(x,u,Du)) = 0 \text{ in } \Omega_{\infty}\\
u = u_0 \text{ on } \de_p\Omega_{\infty} 
\end{align*} where the convex function $f:\RR\times\RR^N\times\RR^{Nn}\to\RR$ satisfies a growth condition from below, viz,
\begin{align*}
    C|\xi|^p\leq f(x,z,\xi),
\end{align*} for a.e. $x\in\Omega,\,z\in\RR^N,\,\xi\in\mathbb{R}^{Nn}$. The notation $\Omega_\infty$ stands for $\Omega\times (0,\infty)$. In particular, no growth condition from above is assumed. This is useful, for example, in the context of parabolic equations with $p,q$ growth conditions for $p\leq q$, where weak solutions may not exist without imposing further conditions on the gap of $p$ and $q$~\cite{bogeleinParabolicSystemsQGrowth2013}. In this paper, we aim to extend the framework of variational solutions to parabolic fractional equations with time independent initial and boundary data
\begin{align}\label{maineq}
\de_t u &+ P.V.\int_{\mathbb{R}^N} \frac{\text{D}_{\xi}H(x,y,u(x,t)-u(y,t))}{|x-y|^N} \,dy = 0 \text{ in } \Omega_{\infty}\nonumber\\
u &= u_0 \text{ on } (\mathbb{R}^N\setminus\Omega)\times(0,\infty)\cup {\Omega}\times\{0\}
\end{align}
where $\Omega$ is an open bounded subset of $\bb{R}^N$, and $\Omega_{\infty} = \Omega\times (0,\infty)$. The function $H=H(x,y,\xi)$ satisfies the following structure condition
\begin{align}
    &\label{eq:bound_below_H} H(x,y,\xi) \geq A\left(\frac{|\xi|}{|x-y|^s}\right)^p\\ 
    &\label{eq: cvx_H} H \text{ is a  Caratheodory function which is convex in the variable } \xi.
\end{align}
We assume $1<p<\infty$, $s \in (0,1)$ and $A>0$.
These structure conditions admit a variety of problems with non-standard growth such as
\begin{itemize}
    \item $H(x,y,\xi) = \left(\frac{|\xi|}{|x-y|^s}\right)^p+a(x,y)\left(\frac{|\xi|}{|x-y|^r}\right)^q$, for $a(x,y)\geq 0$ for $1<p<q$ and $r,s\in (0,1)$. 
    \item $H(x,y,\xi) = \left(\frac{|\xi|}{|x-y|^s}\right)^p\log\left(1+\left(\frac{|\xi|}{|x-y|^s}\right)\right).$
    \item $H(x,y,\xi) = \left(\frac{|\xi|}{|x-y|^s}\right)^{a+b\sin\left(\log\,\log\left(\frac{|\xi|}{|x-y|^s}\right)\right)}$.
\end{itemize}
In particular, we would like to emphasize that in the double phase case, we obtain existence of variational solutions \textit{without} any restrictions on the gap $q-p$.  
\subsection{Background}

There has been a great surge in the study of regularity theory of fractional $p$-Laplace equations and their parabolic counterparts. However, a theory for parabolic fractional equations with non-standard growth, particularly with a double phase character, requires the development of an existence theory which can contend with the unbalanced growth condition given the possibility of the appearance of Lavrentiev phenomenon. The variational framework introduced in~\cite{bogeleinExistenceEvolutionaryVariational2014} seems to be the most suitable for this purpose. 

The motivation for the notion of variational solutions in~\cite{bogeleinExistenceEvolutionaryVariational2014} came from~\cite{lichnewskyPseudosolutionsTimedependentMinimal1978} where a variational notion of solutions was introduced for an evolutionary minimal surface equation. Further, it is related to the De Giorgi conjectures on the construction of weak solutions to hyperbolic equations as a limit of minimizers of certain functionals \cite{degiorgiConjecturesConcerningEvolution1996}. Some forms of these conjectures were proved in~\cite{stefanelliGiorgiConjectureElliptic2011,serraNonlinearWaveEquations2012}. Variational notions are also useful in the realm of stochastic partial differential equations, for example, see~\cite{menovschikovExtendedVariationalTheory2021,scarpaStochasticPDEsConvex2021}.

The regularity theory of $p,q$ growth problems was started by Marcellini in a series of novel papers~\cite{marcelliniRegularityMinimizersIntegrals1989,marcelliniRegularityExistenceSolutions1991,marcelliniRegularityEllipticEquations1993,marcelliniRegularityScalarVariational1996}. There is a large body of work dealing with problems of $(p,q)$-growth as well as other nonstandard growth problems, for which we point to the surveys in \cite{marcelliniRegularityGeneralGrowth2020,mingioneRecentDevelopmentsProblems2021}.

Coming back to the fractional $p$-Laplace equation, in the elliptic case, regularity theory of fractional $p$-Laplace equations has been studied extensively. For example, local boundedness and H\"older regularity in the framework of De Giorgi-Nash-Moser theory was worked out in~\cite{dicastroLocalBehaviorFractional2016} and \cite{cozziRegularityResultsHarnack2017}. Moreover, explicit higher regularity of the gradient is obtained in~\cite{brascoHigherSobolevRegularity2017}. On the other hand, explicit H\"older regularity of the solutions is obtained in \cite{brascoHigherHolderRegularity2018}. Higher integrability by a nonlocal version of Gehring's Lemma was proved in~\cite{kuusiFractionalGehringLemma2014}. For equations of nonstandard growth, the relevant works are \cite{scottSelfimprovingInequalitiesBounded2020,byunOlderRegularityWeak2021,chakerRegularitySolutionsAnisotropic2020,chakerRegularityEstimatesFractional2021,chakerNonlocalOperatorsSingular2020,chakerLocalRegularityNonlocal2021,chakerRegularityNonlocalProblems2021}. For the case of linear equations, i.e., $p=2$, we refer to \cite{caffarelliDriftDiffusionEquations2010,caffarelliRegularityTheoryParabolic2011,laraRegularitySolutionsNon2014,chang-laraRegularitySolutionsNonlocal2014}.

In the case of parabolic counterparts of the fractional $p$-Laplace equations, local boundedness was proved in~\cite{stromqvistLocalBoundednessSolutions2019}. Local boundedness and H\"older regularity has been proved in~\cite{dingLocalBoundednessHolder2021}. Explicit H\"older regularity has been obtained in~\cite{brascoContinuitySolutionsNonlinear2021}. 

We believe this is the first work to deal with fractional evolutionary equations exhibiting unbalanced growth. In the companion article~\cite{prasadLocalBoundednessVariational2021}, we prove local boundedness for parabolic minimizers of $(p,q)$-fractional parabolic equations following~\cite{cozziRegularityResultsHarnack2017}. In another article \cite{ghoshExistenceVariationalSolutions2022}, existence of nonnegative variational solutions for doubly nonlinear nonlocal parabolic equations is proved.

\subsection{Definition}

\begin{definition}
    Let $\Om$ be an open bounded subset of $\RR^N$. Suppose that $H$ satisfies the assumptions~\cref{eq:bound_below_H} and~\cref{eq: cvx_H} and let the time-independent Cauchy-Dirichlet data $u_0$ satisfy
    \begin{align}\label{datahypo}
        u_0\in W^{s,p}(\mathbb{R}^N), u_{|\Omega}\in L^2(\Omega)\mbox{ and }\iint\limits_{\mathbb{R}^N\times\mathbb{R}^N}\frac{H(x,y,u_0(x)-u_0(y))}{|x-y|^N}\,dx\,dy<\infty.
    \end{align}
     By a variational solution to~\cref{maineq} we mean a function $u$ such that for all time $T>0$, $u\in L^p(0,T;W^{s,p}(\mathbb{R}^N))\cap C^0(0,T;L^2(\Om))$, $u-u_0\in L^p(0,T;W^{s,p}_0(\Om))$ and
     \begin{align}
         \label{defvar}
         \int_{0}^{T} \int_{\Om}{\de_t v}\,{(v-u)}\,dx\,dt & + \int_{0}^{T}\iint\limits_{\RR^N\times\RR^N} \frac{H(x,y,v(x,t)-v(y,t))-H(x,y,u(x,t)-u(y,t))}{|x-y|^N}\,dx\,dy\,dt\nonumber\\
         &\geq \frac{1}{2}\norm{(v-u)(\cdot,\tau)}^2_{L^2(\Om)}-\frac{1}{2}\norm{(v-u)(\cdot,0)}^2_{L^2(\Om)},
     \end{align} for all $v\in L^p(0,T;W^{s,p}(\mathbb{R}^N))$ and $\de_t v\in L^2(0,T;\Om)$ such that $v-u_0\in L^p(0,T;W_0^{s,p}(\Om))$.
\end{definition}
\begin{remark}
In our definition of variational solution, both the solution and the comparison map have to match on $\Omega^c\times(0,T)$ and since we have assumed that the data on $\Omega^c$ is in $W^{s,p}(\bb{R}^N)$ we may cancel integrals of $H$ over $\Omega^c\times\Omega^c$ on both sides to obtain the following equivalent form of the variational inequality: 
\[
\begin{split}
    \int_0^T \iint\limits_{C_{\Omega}}\frac{H(x,y,u(x,t)-u(y,t))}{|x-y|^N} \,dx \,dy \,dt \leq \int_0^T \iint\limits_{C_{\Omega}}\frac{H(x,y,v(x,t)-v(y,t))}{{|x-y|^N}} \,dx \,dy \,dt \\ + \int_0^T \int_{\Omega}\de_tv\cdot(v-u) + \frac{1}{2}\norm{v(\cdot,0) - u_0(\cdot)}_{L^2(\Omega)}^2 - \frac{1}{2}\norm{v(\cdot,T) - u(\cdot,T)}_{L^2(\Omega)}^2
\end{split}
\]
where $C_{\Omega} = (\Omega^c\times\Omega^c)^c$. 
\end{remark}
\begin{remark}
Let us also mention that in the double phase case i.e. when $H$ is of the form
\[
H(x,y,\xi) = \left(\frac{|\xi|}{|x-y|^s}\right)^p+a(x,y)\left(\frac{|\xi|}{|x-y|^r}\right)^q
\]
the condition on initial data ~\cref{datahypo} is satisfied if $u_0\in W^{s,p}(\RR^N)\cap L^2(\Om)$ also satisfies:
\[
\iint\limits_{\mathbb{R}^N\times\mathbb{R}^N}\frac{|u_0(x)-u_0(y)|^p}{|x-y|^{N+sp}}+ a(x,y)\frac{|u_0(x)-u_0(y)|^q}{|x-y|^{N+rq}}\,dx\,dy<\infty.
\]
\end{remark}
\subsection{Main Results}

The following existence and uniqueness theorems are the main results of the paper. 

\begin{theorem}{\textbf{(Existence)}}
    \label{mainthm:e}
    Let $\Om$ be an open bounded subset of $\RR^N$. Suppose that $H$ satisfies the assumptions~\cref{eq:bound_below_H} and~\cref{eq: cvx_H} and let the time-independent Cauchy-Dirichlet data $u_0$ satisfy~\cref{datahypo}. Then,
    \begin{itemize}
        \item there exists a variational solution to~\cref{maineq} in the sense of~\cref{defvar},
        \item the variational solution $u\in C^{0,\frac{1}{2}}([0,T];L^2(\Om))$ for all times $T>0$. Further, $\de_t u\in L^2(\Omega_\infty)$ and 
        \item the following energy bounds are verified
        \[
        \int_0^{\infty}\int_{\Omega}|\de_t u(x,t)|^2 \, dx \, dt \leq \Lambda, \mbox{ and}
         \]  
        \[
         \frac{1}{t_2-t_1}\int_{t_1}^{t_2} \iint\limits_{\RR^N\times\RR^N}\frac{H(x,y,u(x,t)-u(y,t))}{|x-y|^{N}} \,dx \, dy\, dt \leq 2e\Lambda.
         \]
        for $0\leq t_1 < t_2 < \infty$.
    \end{itemize} 
\end{theorem}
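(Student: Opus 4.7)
The plan follows the elliptic-regularization / variational-minimization approach of B\"ogelein-Duzaar-Marcellini~\cite{bogeleinExistenceEvolutionaryVariational2014}, adapted to the nonlocal setting. For each $\ve \in (0,1)$, introduce the convex functional
\[
F_\ve(v) := \int_0^\infty e^{-t/\ve} \left( \tfrac{1}{2}\int_\Om |\de_t v|^2\,dx + \tfrac{1}{\ve}\iint_{\RR^N\times\RR^N}\frac{H(x,y,v(x,t)-v(y,t))}{|x-y|^N}\,dx\,dy \right) dt
\]
on the class of trajectories $v:\RR^N \times [0,\infty) \to \RR$ with $v(\cdot,0) = u_0$, $v = u_0$ on $\Om^c\times(0,\infty)$, $v - u_0 \in L^p(0,T;W^{s,p}_0(\Om))$ for every $T>0$, and $\de_t v \in L^2(\Om \times (0,\infty))$. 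The constant-in-time competitor $v \equiv u_0$ is admissible with $F_\ve(v) < \infty$ by~\cref{datahypo}, so the class is nonempty. Coercivity~\cref{eq:bound_below_H} yields weak compactness of minimizing sequences in the appropriate exponentially weighted Bochner spaces; convexity~\cref{eq: cvx_H} of $H$ in $\xi$ combined with Ioffe's lower-semicontinuity theorem (applied to the double integral over $\RR^N \times \RR^N$) then produces a minimizer $u_\ve$.

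Using convexity of $F_\ve$ directly, the minimality $F_\ve(u_\ve) \leq F_\ve(v)$ can be rearranged into an $\ve$-level variational inequality without invoking an Euler--Lagrange equation---crucial since no growth bound from above is assumed on $H$. Integrating the time-derivative term by parts against the weight $e^{-t/\ve}$ (and using $u_\ve(\cdot,0) = u_0 = v(\cdot,0)$) produces a form analogous to~\cref{defvar} carrying $e^{-t/\ve}$ factors. Testing against $v \equiv u_0$ yields uniform bounds on $\int_0^\infty e^{-t/\ve}\int_\Om |\de_t u_\ve|^2\,dx\,dt$ and on the time-averaged $H$-energy, both controlled by the constant
\[
\Lambda := \iint_{\RR^N\times\RR^N} \frac{H(x,y,u_0(x)-u_0(y))}{|x-y|^N}\,dx\,dy
\]
from~\cref{datahypo}.

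From these uniform estimates one extracts a subsequence with $u_\ve \rightharpoonup u$ weakly in $L^p_{\loc}(0,\infty;W^{s,p}(\RR^N))$ and $\de_t u_\ve \rightharpoonup \de_t u$ weakly in $L^2(\Om \times (0,\infty))$; Aubin--Lions compactness upgrades this to strong $L^2$ convergence on each slab $\Om \times (0,T)$. Passing $\ve \to 0$ in the $\ve$-variational inequality, the $H$-integrals pass by weak lower semicontinuity, the linear pairing $\int \de_t v\,(v-u_\ve)$ by weak-strong pairing, and the measures $\ve^{-1}e^{-t/\ve}\,dt$ concentrate at $t = 0$ so the boundary contributions from integration by parts reproduce the endpoint quadratic terms in~\cref{defvar}. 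The $L^2$ bound on $\de_t u$ immediately yields $u \in C^{0,1/2}([0,T];L^2(\Om))$ via Cauchy--Schwarz, and the final time-averaged energy bound on $(t_1,t_2)$ follows by choosing a stationary competitor on that slab in the variational inequality~\cref{defvar} and rearranging.

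\textbf{Main obstacle.} The delicate step is the limit passage $\ve \to 0$: one must simultaneously verify one-sided lower-semicontinuity of the nonlocal $H$-energy and track the boundary terms produced by the exponentially weighted integration by parts in time. Lower semicontinuity is actually cleaner than in the local setting, since the argument of $H$ is the scalar difference $u(x,t)-u(y,t)$ (no gradient appears), so Ioffe's theorem on $\RR^N \times \RR^N \times (0,T)$ applies directly to the convex integrand $H$. The boundary analysis requires a quantitative concentration estimate for $\ve^{-1}e^{-t/\ve}$, which is precisely where the exponential weight plays its dual role as a function that is both integrable on $(0,\infty)$ and approximates a Dirac mass after normalization.
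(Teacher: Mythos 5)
Your overall strategy matches the paper (minimize the same weighted functional $F_\ve$ over trajectories, extract a minimizer by the direct method, pass $\ve\to 0$), but there is a genuine gap in the energy-estimate step that the paper spends a whole subsection on and that your sketch skips entirely.

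You claim that testing minimality against the stationary competitor $v\equiv u_0$ yields a \emph{uniform} (in $\ve$) bound on $\int_0^\infty e^{-t/\ve}\int_\Om|\de_t u_\ve|^2$ and on a time-averaged $H$-energy, and that from these you can extract weak limits $\de_t u_\ve\rightharpoonup \de_t u$ in $L^2(\Om\times(0,\infty))$ and $u_\ve\rightharpoonup u$ in $L^p_{\loc}(0,\infty;W^{s,p})$. This does not follow. What comparison against $v\equiv u_0$ actually gives is only $F_\ve(u_\ve)\le\Lambda$, i.e.
\begin{align*}
\int_0^\infty e^{-t/\ve}\int_\Om |\de_t u_\ve|^2\,dx\,dt \le 2\Lambda,
\qquad
\int_0^\infty e^{-t/\ve}\iint\limits_{\RR^N\times\RR^N}\frac{H(x,y,u_\ve(x,t)-u_\ve(y,t))}{|x-y|^N}\,dx\,dy\,dt \le \ve\Lambda,
\end{align*}
and neither of these is $\ve$-uniform in a useful sense: the weight $e^{-t/\ve}$ concentrates at $t=0$ as $\ve\to 0$, so on any fixed slab $(0,T)$ these bounds degenerate (one would have to pay a factor $e^{T/\ve}$). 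In particular, you cannot conclude boundedness of $\{\de_t u_\ve\}$ in $L^2(\Om\times(0,T))$, nor of $\{u_\ve\}$ in $L^p(0,T;W^{s,p})$, from them alone. This is exactly the issue the paper resolves in the ``Improvement'' subsection: one performs an \emph{inner variation in time} (reparametrization $u_\ve^\delta(x,s)=u_\ve(x,\phi_\delta(s))$), which is admissible precisely because it leaves the spatial structure untouched, to derive the identity $\frac{d}{dt}\bigl(e^{t/\ve}I_\ve(t)\bigr)=-2L_\ve(t)\le 0$ (Lemma 3.3). Monotonicity of $e^{t/\ve}I_\ve(t)$ is what converts the exponentially weighted bound $F_\ve(u_\ve)\le\Lambda$ into the genuinely $\ve$-independent bounds $\int_0^\infty\int_\Om|\de_t u_\ve|^2\le\Lambda$ (Lemma 3.4) and $\frac{1}{t_2-t_1}\int_{t_1}^{t_2}\iint H\le 2e\Lambda$ (Lemma 3.5). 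Those two lemmas are the source of the weak compactness you invoke, and of the final two bullet points of the theorem. Without the inner-variation argument (or an equivalent substitute), the passage to the limit has no foundation.

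Two smaller remarks. First, you invoke Aubin--Lions to get strong $L^2_{\loc}$ convergence, but the paper does not need it: once the uniform $L^2$ bound on $\de_t u_\ve$ is in hand, the estimate $\norm{u_\ve(\cdot,t_2)-u_\ve(\cdot,t_1)}_{L^2(\Om)}^2\le\Lambda|t_2-t_1|$ (and its limit version) handles the endpoint/boundary terms produced by the cut-off $\xi_\theta$; the pairing $\int\xi_\theta\de_t v\,(v-u_\ve)$ passes by weak convergence alone. Second, your claim that the time-averaged $H$-bound for $u$ ``follows by choosing a stationary competitor on that slab'' is not how the paper obtains it — the paper passes the bound of Lemma 3.5 for $u_\ve$ to the limit by lower semicontinuity — and it is not clear that your route works for arbitrary $0\le t_1<t_2$, since the variational inequality \eqref{defvar} is posed over $(0,T)$ with a fixed initial time. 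The rest of the plan (functional, lower semicontinuity via convexity, concentration of the normalized weight at $t=0$ reproducing the endpoint terms) is consistent with the paper.
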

 
\begin{theorem}{\textbf{(Uniqueness)}}\label{mainthm:u}
If $\xi\mapsto H(x,y,\xi)$ is strictly convex then there is at most one variational solution in the sense of ~\cref{defvar}.
\end{theorem}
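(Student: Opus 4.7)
The plan is to compare any two variational solutions $u_1, u_2$ sharing the Cauchy--Dirichlet datum $u_0$ by testing each of their variational inequalities against a single common time-regularization of the midpoint $w := \tfrac{1}{2}(u_1+u_2)$. Since the definition grants no a priori $L^2$-control on $\partial_t w$, one cannot use $w$ itself; instead I would take the exponential (Yosida) mollification
\[
w_h(x,t) := e^{-t/h}\, u_0(x) + \frac{1}{h}\int_0^t e^{-(t-s)/h}\, w(x,s)\, ds,
\]
which satisfies $\partial_t w_h = (w - w_h)/h \in L^2(\Omega \times (0,T))$, coincides with $u_0$ on $\Omega^c \times (0,T)$ because the datum is time-independent, and converges to $w$ in $L^p(0,T; W^{s,p}(\RR^N)) \cap C^0([0,T]; L^2(\Omega))$ as $h \to 0^+$; linearity of the mollification further gives the decomposition $w_h = \tfrac{1}{2}((u_1)_h + (u_2)_h)$.

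Testing the variational inequality \cref{defvar} of $u_1$ and of $u_2$ against $v = w_h$ and summing, the algebraic identity
\[
2w_h - u_1 - u_2 \;=\; ((u_1)_h - u_1) + ((u_2)_h - u_2) \;=\; -2h\, \partial_t w_h
\]
turns the time contribution into the non-positive quantity $-2h \|\partial_t w_h\|_{L^2(\Omega\times(0,T))}^2$; the initial trace terms vanish because $w_h(\cdot,0) = u_0 = u_i(\cdot,0)$, and after discarding the non-positive time term the summed inequality becomes
\[
\int_0^T \iint_{\RR^N\times\RR^N} \frac{2H(w_h) - H(u_1) - H(u_2)}{|x-y|^N}\,dx\,dy\,dt \;\ge\; \tfrac{1}{2}\|(w_h - u_1)(\cdot,T)\|_{L^2(\Omega)}^2 + \tfrac{1}{2}\|(w_h - u_2)(\cdot,T)\|_{L^2(\Omega)}^2.
\]
Convexity of $H(x,y,\cdot)$ applied to the decomposition of $w_h$, followed by Jensen's inequality applied to each $(u_i)_h$, yields the pointwise upper bound $2H(w_h) \le [H(u_1)]_h + [H(u_2)]_h$, where $[H(u_i)]_h$ denotes the corresponding exponential mollification of $H(u_i)$. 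The integrated difference $[H(u_i)]_h - H(u_i)$ against the weight $|x-y|^{-N}$ tends to zero as $h \to 0^+$ by the $L^1$-convergence property of the mollification, once one knows $H(u_i) \in L^1(\RR^N\times\RR^N\times(0,T); |x-y|^{-N}\,dx\,dy\,dt)$---a bound that is obtained by testing the variational inequality of $u_i$ against the stationary competitor $v = u_0$.

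Passage to the limit $h \to 0^+$ then yields
\[
\int_0^T \iint_{\RR^N\times\RR^N} \frac{2H\!\left(\tfrac{u_1+u_2}{2}\right) - H(u_1) - H(u_2)}{|x-y|^N}\,dx\,dy\,dt \;\ge\; \tfrac{1}{4}\|(u_1 - u_2)(\cdot,T)\|_{L^2(\Omega)}^2,
\]
whose left side is non-positive by convexity of $H$ in its third argument and whose right side is non-negative, so both must vanish. Strict convexity of $H(x,y,\cdot)$ then converts the vanishing of the left side into $u_1(x,t) - u_1(y,t) = u_2(x,t) - u_2(y,t)$ for a.e.\ $(x,y,t)$, and in combination with the common datum $u_1 = u_2 = u_0$ on $\Omega^c$ together with the arbitrariness of $T > 0$ this gives $u_1 \equiv u_2$. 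The principal technical obstacles are justifying the passage to the limit inside the nonlocal integral---addressed by a Vitali-type argument using a.e.\ convergence of $H((u_i)_h)$ to $H(u_i)$ along a subsequence together with domination by the $L^1$-convergent sequence $[H(u_i)]_h$---and verifying the admissibility of $w_h$ as a comparison map, in particular the inclusion $w_h - u_0 \in L^p(0,T; W_0^{s,p}(\Omega))$, which follows from the representation $(w_h - u_0)(\cdot,t) = h^{-1}\int_0^t e^{-(t-s)/h}(w - u_0)(\cdot,s)\,ds$.
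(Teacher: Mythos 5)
Your argument is correct and follows essentially the same route as the paper: both sum the two variational inequalities tested against the exponential time-mollification $[w]_h$ of the midpoint $w=\frac{1}{2}(u_1+u_2)$ with datum $u_0$, both exploit $\de_t[w]_h=-\frac{1}{h}([w]_h-w)$ to render the time contribution non-positive, both estimate the $H$-integral of $[w]_h$ via convexity and Jensen's inequality (packaged in the paper as Theorem~\ref{convergenceoffunctionals}), and both pass $h\to 0^+$. The one substantive difference is that you retain the terminal traces $\frac{1}{2}\norm{(w_h-u_i)(\cdot,T)}^2_{L^2(\Omega)}$, which the paper discards, and this is actually the sharper move: after the limit the summed inequality reads
\[
\frac{1}{4}\norm{(u_1-u_2)(\cdot,T)}^2_{L^2(\Omega)}\;\le\;\int_0^T\!\iint\limits_{\RR^N\times\RR^N}\frac{2H\bigl(x,y,w(x,t)-w(y,t)\bigr)-H\bigl(x,y,u_1(x,t)-u_1(y,t)\bigr)-H\bigl(x,y,u_2(x,t)-u_2(y,t)\bigr)}{|x-y|^N}\,dx\,dy\,dt,
\]
whose right side is $\le 0$ by convexity alone, so the terminal term vanishes for every $T>0$ and $u_1\equiv u_2$ follows at once. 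Your closing appeal to strict convexity (to extract $u_1(x,t)-u_1(y,t)=u_2(x,t)-u_2(y,t)$ from the vanishing $H$-gap) is therefore redundant; in fact your argument establishes uniqueness under plain convexity, a mild strengthening of the stated theorem. The paper, having dropped the terminal terms, instead derives a contradiction purely from the $H$-integrals, and that is where strict convexity is genuinely invoked.
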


\begin{remark}
    The restriction $p>\frac{2N}{2s+N}$ is often seen as a natural lower bound on $p$ since the proofs of existence that use the monotone operator methods require it on account of the Sobolev embedding. The proof here which uses variational techniques avoids this lower bound to obtain existence in the full range $p\in (1,\infty)$. In the paper \cite{bogeleinExistenceEvolutionaryProblems2019}, functionals with linear growth, that is, corresponding to $p=1$ are also considered by stability methods. It would be of interest to study the stability method for the nonlocal operator.
\end{remark}

\section{Preliminaries}
\subsection{Notations}
We begin by collecting the standard notation that will be used throughout the paper:
\begin{itemize}
\item We shall denote $N$ to be the space dimension. We shall denote by $z=(x,t)$ a point in $ \RR^N\times (0,T)$.  
\item We shall alternately use $\dfrac{\partial f}{\partial t},\partial_t f,f'$ to denote the time derivative of $f$.
 \item Let $\Omega$ be an open bounded domain in $\mathbb{R}^N$ with boundary $\partial \Omega$ and for $0<T\leq\infty$,  let $\Omega_T\coloneqq \Omega\times (0,T)$. 
\item Integration with respect to either space or time only will be denoted by a single integral $\int$ whereas integration on $\Om\times\Om$ or $\RR^N\times\RR^N$ will be denoted by a double integral $\iint$.
\end{itemize}

\subsection{Sobolev spaces}
Let $1<p<\infty$, we denote by $p'=p/(p-1)$ the conjugate exponent of $p$. Let $\Om$ be an open subset of $\RR^N$. We define the {\it Sobolev-Slobodeki\u i} space, which is the fractional analogue of Sobolev spaces.
\begin{align*}
    W^{s,p}(\Om)=\left\{ \psi\in L^p(\Omega): [\psi]_{W^{s,p}(\Om)}<\infty \right\}, s\in (0,1),
\end{align*} where the seminorm $[\cdot]_{W^{s,p}(\Om)}$ is defined by 
\begin{align*}
    [\psi]_{W^{s,p}(\Om)}=\left(\iint\limits_{\Om\times\Om} \frac{|\psi(x)-\psi(y)|^p}{|x-y|^{N+ps}}\,dx\,dy\right)^{\frac 1p}.
\end{align*}
The space when endowed with the norm $\norm{\psi}_{W^{s,p}(\Om)}=\norm{\psi}_{L^p(\Om)}+[\psi]_{W^{s,p}(\Om)}$ becomes a Banach space. The space $W^{s,p}_0(\Om)$ is the subspace of $W^{s,p}(\RR^N)$ consisting of functions that vanish outside $\Om$.

Let $I$ be an interval and let $V$ be a separable, reflexive Banach space, endowed with a norm $\norm{\cdot}_V$. We denote by $V^*$ its topological dual space. Let $v$ be a mapping such that for a.e. $t\in I$, $v(t)\in V$. If the function $t\mapsto \norm{v(t)}_V$ is measurable on $I$, then $v$ is said to belong to the Banach space $L^p(I;V)$ if $\int_I\norm{v(t)}_V^p\,dt<\infty$. It is well known that the dual space $L^p(I;V)^*$ can be characterized as $L^{p'}(I;V^*)$.

\subsection{Mollification in time}
Throughout the paper, we will use the following mollification in time. Let $\Omega$ be an open subset of $\mathbb{R}^N$. For $T>0$, $v\in L^1(\Omega_T)$, $v_0\in L^1(\Omega)$ and $h\in (0,T]$, we define
\begin{align}
    \label{timemollify}
    [v]_h(\cdot,t)=e^{-\frac{t}{h}}v_0 + \frac{1}{h}\int_0^t e^{\frac{s-t}{h}}v(\cdot,s)\,ds,
\end{align} for $t\in [0,T]$.
The convergence properties of mollified functions have been collected in~\ref{Molli}.

\subsection{Auxiliary Results}
We collect the following standard results which will be used in the course of the paper. We begin with a general result on convex minimization.

\begin{proposition}(\cite[Theorem~2.3]{rindlerCalculusVariations2018})\label{mintheorem}
    Let $X$ be a closed affine subset of a reflexive Banach space and let $\mathcal{F}: X\to (-\infty,\infty]$. Assume the following:
    \begin{enumerate}
        \item For all $\Lambda\in\RR$, the sublevel set $\{x\in X: \mathcal{F}[u]<\Lambda\}$ is sequentially weakly precompact, i.e., if for a sequence $(u_j)\subset X$, $\mathcal{F}[u_j]<\Lambda$, then $u_j$ has a weakly convergent subsequence.
        \item For all sequences $(u_j)\subset X$ with $u_j\rightharpoonup u$ in $X$-weak, it holds that
        \begin{align*}
            \mathcal{F}[u] \leq \liminf\limits_{j\to\infty} \mathcal{F}[u_j].
        \end{align*}
    \end{enumerate}
    Then, the minimization problem 
    \begin{align*}
    \mbox{\bf Minimize $\bf\mathcal{F}[u]$ over all $\bf u\in X$}  
    \end{align*}
    has a solution.
\end{proposition}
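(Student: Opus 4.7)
This is the standard direct method of the calculus of variations, and the plan is to construct a minimizer as the weak limit of a minimizing sequence. Set $m \coloneqq \inf_{u \in X} \mathcal{F}[u] \in [-\infty,+\infty]$; if $m = +\infty$ every $u\in X$ trivially achieves it, so I may assume $m < +\infty$ and pick a minimizing sequence $(u_j)\subset X$ with $\mathcal{F}[u_j]\to m$. For any $\Lambda>m$ the tail of $(u_j)$ eventually lies in the sublevel set $\{x\in X:\mathcal{F}[x]<\Lambda\}$, so hypothesis~(1) produces a (non-relabelled) subsequence $u_{j_k}\rightharpoonup u^\star$ in the ambient reflexive Banach space.

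The main subtlety is to verify that this weak limit $u^\star$ lies in the admissible set $X$, which is not assumed to be a linear subspace. Here I would invoke that $X$ is norm-closed and affine, hence convex; Mazur's theorem then promotes norm-closedness of a convex set to weak closedness, which yields $u^\star\in X$. With $u^\star$ admissible, hypothesis~(2) gives $\mathcal{F}[u^\star]\leq\liminf_k \mathcal{F}[u_{j_k}] = m$, while the definition of infimum forces $\mathcal{F}[u^\star]\geq m$, so $u^\star$ is a minimizer. The same argument rules out $m=-\infty$ a posteriori: if $m=-\infty$ then the above construction produces $u^\star\in X$ with $\mathcal{F}[u^\star]=-\infty$, contradicting $\mathcal{F}:X\to(-\infty,+\infty]$.

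I do not anticipate a serious obstacle: hypothesis~(1) supplies compactness for minimizing sequences, hypothesis~(2) is precisely the sequential lower semicontinuity needed to pass to the limit, and the only ingredient beyond the hypotheses is Mazur's theorem, used once to ensure the affine admissible class is weakly closed. Since the paper cites Rindler's textbook for this statement, the self-contained argument consists of exactly these three standard manoeuvres in the order above.
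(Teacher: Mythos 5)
Your proof is correct and follows the standard direct method of the calculus of variations, which is exactly the argument underlying the cited \cite[Theorem~2.3]{rindlerCalculusVariations2018}; the paper itself only cites that result and does not reprove it. The one non-automatic step — that the weak limit of the minimizing subsequence remains in the affine admissible class $X$ — you correctly handle via Mazur's theorem (norm-closed convex sets are weakly closed), and the a posteriori exclusion of $m=-\infty$ is also the standard device, so there is nothing to flag.
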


We will need the following general result on weak lower semicontinuity of functionals.

\begin{proposition}(\cite[Corollary~3.9]{brezisFunctionalAnalysisSobolev2011})
    \label{lowsemcont} 
    Assume that $\phi:E\to (-\infty,\infty]$ is convex and lower semicontinuous in the strong topology. Then $\phi$ is lower semicontinuous in the weak topology.
\end{proposition}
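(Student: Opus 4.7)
The plan is to reduce weak lower semicontinuity of $\phi$ to a geometric statement about its sublevel sets, and then invoke Mazur's theorem. Recall the standard equivalence: a map $\psi:E\to(-\infty,\infty]$ is lower semicontinuous for a topology $\tau$ if and only if every sublevel set $S_\la := \{x\in E : \psi(x)\leq\la\}$, $\la\in\RR$, is $\tau$-closed. It therefore suffices to verify that each $S_\la$ is weakly closed.

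First I would establish two elementary properties of $S_\la$. Convexity of $\phi$ ensures that $S_\la$ is a convex subset of $E$: if $x,y\in S_\la$ and $\tht\in[0,1]$, then
\begin{align*}
    \phi(\tht x + (1-\tht)y) \leq \tht\phi(x) + (1-\tht)\phi(y) \leq \la,
\end{align*}
so $\tht x + (1-\tht)y\in S_\la$. Similarly, strong lower semicontinuity of $\phi$ ensures that $S_\la$ is norm closed: if $x_n\in S_\la$ and $x_n\to x$ strongly, then $\phi(x)\leq\liminf_n \phi(x_n)\leq\la$, so $x\in S_\la$. Both properties pass through the extended value $+\infty$ without issue.

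The main step is then to apply Mazur's theorem, a classical consequence of the Hahn--Banach separation theorem: every convex, norm closed subset of a Banach space is also weakly closed. Applied to each $S_\la$ with $\la\in\RR$, this yields weak closedness of all sublevel sets of $\phi$, which is exactly the assertion of weak lower semicontinuity. The only nontrivial ingredient is the geometric Hahn--Banach step underlying Mazur's theorem; everything else is a short manipulation of the definitions.

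If one prefers a sequential argument (and $E$ has a suitable weak sequential structure), an alternative route is: given $x_n\rightharpoonup x$ with $L:=\liminf_n \phi(x_n)<\infty$, pass to a subsequence realising the liminf, and use Mazur's lemma to produce convex combinations $y_k=\sum_{n\geq N(k)}\al_n^{(k)}x_n$ with $y_k\to x$ strongly. Convexity of $\phi$ gives $\phi(y_k)\leq\sup_{n\geq N(k)}\phi(x_n)\to L$, and strong lower semicontinuity then yields $\phi(x)\leq\liminf_k \phi(y_k)\leq L$, as required.
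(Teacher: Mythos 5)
Your argument is correct and is precisely the standard proof given in the cited reference (Brezis, Corollary~3.9): the paper itself states this as a citation without proof, and your reduction to sublevel sets combined with Mazur's theorem (convex $+$ norm-closed $\Rightarrow$ weakly closed, via Hahn--Banach separation) is exactly how Brezis derives it. The sequential variant you sketch via Mazur's lemma is also sound and is in fact the form most often invoked in applications like the one in this paper, where $E = K_\ve$ is reflexive and one extracts weakly convergent subsequences from bounded minimizing sequences.
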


We have the following Sobolev-type inequality~\cite[Theorem~6.5]{dinezzaHitchhikersGuideFractional2012}.

\begin{theorem}[\cite{dinezzaHitchhikersGuideFractional2012}]
    Let $s\in(0,1)$ and $1\leq p<\infty$, $sp<N$ and let $\kappa^*=\frac{N}{N-sp}$, then for any $g\in W^{s,p}(\mathbb{R}^N)$ and $\kappa\in [1,\kappa^*]$, we have
    \begin{align}\label{sobolev1}
        \norm{g}_{L^{\kappa p}}^p\leq C\iint\limits_{\mathbb{R}^N\times\mathbb{R}^N}\frac{|g(x)-g(y)|^p}{|x-y|^{N+sp}}\,dx\,dy.
    \end{align} If $g\in W^{s,p}(\Om)$ and $\Om$ is an extension domain, then
    \begin{align}
        \label{sobolev2}
        ||g||_{L^{\kappa p}(\Om)}^p\leq C\norm{g}_{W^{s,p}(\Om)}.
    \end{align} If $sp=N$, then \cref{sobolev1} holds for all $\kappa\in [1,\infty)$, whereas if $sp>N$, then \cref{sobolev2} holds for all $\kappa\in[1,\infty]$.
\end{theorem}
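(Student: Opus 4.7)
The plan is to follow the classical proof of the fractional Sobolev embedding, as in Di Nezza--Palatucci--Valdinoci. The hardest case is the critical exponent $\kappa=\kappa^*=N/(N-sp)$; once this is secured, the range $\kappa\in[1,\kappa^*)$ follows by interpolation between $L^p$ and $L^{\kappa^*p}$, and the cases $sp=N$ and $sp>N$ by minor modifications discussed below.

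I would first reduce to $g\in C_c^\infty(\mathbb{R}^N)$ by density and treat the case $p=1$ separately. Setting $E_t=\{|g|>t\}$, one writes
\[
\|g\|_{L^{N/(N-s)}}^{N/(N-s)}=\frac{N}{N-s}\int_0^\infty t^{N/(N-s)-1}|E_t|\,dt,
\]
and applies the fractional isoperimetric-type inequality
\[
|E|^{(N-s)/N}\leq C\iint_{\mathbb{R}^N\times\mathbb{R}^N}\frac{|\chi_E(x)-\chi_E(y)|}{|x-y|^{N+s}}\,dx\,dy
\]
to $E=E_{2^k}$. A dyadic telescoping argument, writing each characteristic function $\chi_{E_{2^k}}$ as a truncated piece of $g$ and summing geometrically, reduces the total to $[g]_{W^{s,1}}$ and yields~\cref{sobolev1} for $p=1$.

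To bootstrap from $p=1$ to general $p>1$, I would apply the $p=1$ embedding to $h=|g|^{\gamma}$ with $\gamma=p(N-s)/(N-sp)$, chosen so that $\|h\|_{L^{N/(N-s)}}^{N/(N-s)}=\|g\|_{L^{\kappa^*p}}^{\kappa^*p}$. The pointwise inequality
\[
\bigl||a|^{\gamma}-|b|^{\gamma}\bigr|\leq\gamma\bigl(|a|^{\gamma-1}+|b|^{\gamma-1}\bigr)|a-b|
\]
together with H\"older's inequality (exponents $p$ and $p'$) splits the double integral defining $[h]_{W^{s,1}}$ into a product of $[g]_{W^{s,p}}$ and a factor involving $\|g\|_{L^{\kappa^*p}}^{\gamma-1}$, and absorbing the latter on the left yields the critical embedding $\|g\|_{L^{\kappa^*p}}^p\leq C[g]_{W^{s,p}}^p$.

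The estimate~\cref{sobolev2} for extension domains follows by composing the first inequality with an extension operator $\mathcal{E}\colon W^{s,p}(\Omega)\to W^{s,p}(\mathbb{R}^N)$, whose existence is built into the definition of extension domain, producing the full $W^{s,p}(\Omega)$ norm on the right. For $sp=N$ one perturbs to $s'<s$ with $s'p<N$ and applies the subcritical result together with the embedding $W^{s,p}\hookrightarrow W^{s',p}$ on bounded domains to recover any finite $\kappa$; for $sp>N$, the fractional Morrey-type embedding into $C^{0,s-N/p}$ delivers the $\kappa=\infty$ case. The principal obstacle is the fractional isoperimetric inequality --- the genuinely nonlocal step --- whose honest proof requires a careful covering/dyadic decomposition argument; the rest of the proof is algebraic bookkeeping.
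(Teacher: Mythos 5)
The paper does not prove this theorem; it is a standard result cited verbatim from Di Nezza--Palatucci--Valdinoci, so there is no ``paper's own proof'' to compare against, and the relevant yardstick is the proof in that reference.

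With that understood, your proposed route has a genuine gap at the bootstrap step, and the gap is precisely the point where the fractional theory departs from the classical one. In the local Gagliardo argument, $\|\nabla(|g|^\gamma)\|_{L^1}\le\gamma\,\||g|^{\gamma-1}\|_{L^{p'}}\|\nabla g\|_{L^p}$ is an honest H\"older inequality because $\nabla(|g|^\gamma)$ is a \emph{pointwise} product. In the fractional setting you must estimate the double integral $[\,|g|^\gamma\,]_{W^{s,1}}=\iint\frac{\bigl||g(x)|^\gamma-|g(y)|^\gamma\bigr|}{|x-y|^{N+s}}\,dx\,dy$, and after applying the pointwise inequality $\bigl||a|^\gamma-|b|^\gamma\bigr|\le\gamma(|a|^{\gamma-1}+|b|^{\gamma-1})|a-b|$ and splitting by H\"older with exponents $p,p'$, the exponent bookkeeping forces the kernel split $\frac{1}{|x-y|^{N+s}}=\frac{1}{|x-y|^{(N+sp)/p}}\cdot\frac{1}{|x-y|^{N/p'}}$. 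The first factor recombines into $[g]_{W^{s,p}}$ as desired, but the second factor, after raising to the power $p'$, leaves
\[
\iint\limits_{\mathbb{R}^N\times\mathbb{R}^N}\frac{\bigl(|g(x)|^{\gamma-1}+|g(y)|^{\gamma-1}\bigr)^{p'}}{|x-y|^{N}}\,dx\,dy,
\]
and the kernel $|x-y|^{-N}$ is non-integrable near the diagonal, so this quantity is $+\infty$ for any nontrivial $g$. There is no way to absorb this factor into $\|g\|_{L^{\kappa^*p}}^{\gamma-1}$ as you propose, because the non-integrability is a feature of the kernel, not of the weight. This is exactly why one cannot transplant the classical $p=1$-plus-bootstrap proof to the Gagliardo seminorm.

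The proof in Di Nezza--Palatucci--Valdinoci avoids this entirely and treats all $p\ge1$ at once: their Lemma~6.1 gives a pointwise lower bound $\int_{E^c}|x-y|^{-N-sp}\,dy\ge C\,|E|^{-sp/N}$ for $x\in E$ (a kernel estimate, not an isoperimetric inequality), their Lemma~6.2 uses this together with a dyadic decomposition of the range $\{|g|>2^k\}$ to control $\sum_k 2^{pk}a_{k+1}a_k^{-sp/N}$ by $[g]_{W^{s,p}}^p$, and Theorem~6.5 then deduces the $L^{p^*}$ bound by a convexity/rearrangement of that sum. No $p=1$ case, no chain rule for $|g|^\gamma$, and no fractional isoperimetric inequality are needed. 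If you want to salvage a $p=1$-first argument, you would need a substitute for the Gagliardo chain-rule step --- for instance passing through symmetric-decreasing rearrangement (the seminorm decreases under rearrangement) and then arguing directly for radial monotone functions --- rather than a naive H\"older split of the double integral. Your remarks on the extension-domain case, $sp=N$, and $sp>N$ are fine as far as they go, but they sit downstream of the critical embedding, whose proof as written does not close.
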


We will require the compact embedding as follows.

\begin{proposition}(\cite[Proposition~2.1]{hanCompactSobolevSlobodeckijEmbeddings2022})
    \label{compactembed}
    Assume $N\geq 2$, $1\leq p\leq \infty$ and $0<s<1$. Let $\Om$ be a bounded extension domain. When $sp<N$, the embedding $W^{s,p}(\Om)\mapsto L^r(\Om)$ is compact for $r\in [1,p_*)$, and when $sp\geq N$, the same embedding is compact for $r\in [1,\infty)$. The theorem also holds for $W^{s,p}_0(\Om)$ for any bounded domain in $\RR^N$.
\end{proposition}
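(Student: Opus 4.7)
The plan is to reduce the compactness claim to the Fr\'echet--Kolmogorov theorem in $L^p$ and then upgrade to exponents $r < p_*$ by interpolation against the Sobolev embedding \cref{sobolev2}, where $p_* = Np/(N-sp)$ in the subcritical case. First, given a bounded sequence $\{u_n\}$ in $W^{s,p}(\Om)$, use that $\Om$ is an extension domain to fix a bounded linear extension $E: W^{s,p}(\Om) \to W^{s,p}(\RR^N)$, multiply by a cutoff $\chi \in C_c^\infty(\RR^N)$ with $\chi \equiv 1$ on $\Om$, and set $\tilde u_n = \chi \cdot E u_n$. Then $\{\tilde u_n\}$ is bounded in $W^{s,p}(\RR^N)$ with supports in a common compact set $K \supset \Om$. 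For the $W^{s,p}_0(\Om)$ case the zero extension outside $\Om$ achieves the same reduction without needing an extension operator.

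The analytic heart is the translation estimate
\[
\|\tilde u(\cdot + h) - \tilde u\|_{L^p(\RR^N)}^p \leq C\,|h|^{sp}\,[\tilde u]_{W^{s,p}(\RR^N)}^p \qquad \text{for } |h| \leq 1.
\]
I would prove this by expressing, for each $x$,
\[
u(x+h) - u(x) = \frac{1}{|B_{|h|}(x)|}\int_{B_{|h|}(x)} \bigl[(u(x+h) - u(y)) + (u(y) - u(x))\bigr]\,dy,
\]
applying Jensen, raising to the $p$-th power, integrating in $x$, and using Fubini with a change of variables to recognise the Sobolev--Slobodeki\u i seminorm on the right.

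With this estimate in hand, the family $\{\tilde u_n\}$ is bounded in $L^p(\RR^N)$, uniformly supported in $K$, and has uniformly equicontinuous translates, so Fr\'echet--Kolmogorov yields relative $L^p(\RR^N)$-compactness; restricting back to $\Om$ gives the claim in $L^p(\Om)$. To upgrade, for $r \in [1, p_*)$ pick $\theta \in (0,1]$ with $\tfrac{1}{r} = \tfrac{\theta}{p} + \tfrac{1-\theta}{p_*}$; the Sobolev inequality supplies uniform $L^{p_*}(\Om)$ bounds, and the interpolation
\[
\|u_n - u_m\|_{L^r(\Om)} \leq \|u_n - u_m\|_{L^p(\Om)}^{\theta}\,\|u_n - u_m\|_{L^{p_*}(\Om)}^{1-\theta}
\]
converts $L^p$-convergence of a subsequence into $L^r$-convergence. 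The borderline cases $sp = N$ and $sp > N$ are handled identically, replacing $p_*$ by an arbitrary finite exponent or by $\infty$.

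The main obstacle is precisely the translation estimate: the seminorm averages over all pairs of points rather than pairs at a single separation $|h|$, so converting pairwise information into translate information demands the averaging trick above, with averaging ball of radius comparable to $|h|$. Care is needed to keep the constant independent of $h$ and to retain the sharp exponent $sp$ on $|h|$; if one is sloppy, one either loses the sharp scaling or introduces an unwanted logarithm.
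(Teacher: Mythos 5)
The paper does not prove this proposition; it is quoted as an external result (\cite[Proposition~2.1]{hanCompactSobolevSlobodeckijEmbeddings2022}), so there is no in-paper argument to measure you against. Your proposal follows the classical Fr\'echet--Kolmogorov route, which is also how this result is proved in the standard references (e.g.\ \cite[Theorem~7.1]{dinezzaHitchhikersGuideFractional2012}), and the core of it is sound. The averaging identity for $u(x+h)-u(x)$ over $B_{|h|}(x)$ together with the bound $|x-y|^{-(N+sp)} > |h|^{-(N+sp)}$ on that ball (and $|x+h-y|\le 2|h|$ for the other piece) does deliver
\[
\|u(\cdot+h)-u\|_{L^p(\RR^N)}^p \;\leq\; \frac{2^{p-1}\bigl(2^{N+sp}+1\bigr)}{\omega_N}\,|h|^{sp}\,[u]_{W^{s,p}(\RR^N)}^p,
\]
with a constant independent of $h$ and the sharp exponent $sp$, as you suspected. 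The reduction via a bounded extension operator and cutoff, and the observation that for $W^{s,p}_0(\Om)$ the zero extension already lands you in $W^{s,p}(\RR^N)$ with compact support, are both correct.

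Two small gaps are worth closing. First, the interpolation step as written, choosing $\theta\in(0,1]$ with $\tfrac{1}{r}=\tfrac{\theta}{p}+\tfrac{1-\theta}{p_*}$, only has a solution when $r\in[p,p_*)$; for $r\in[1,p)$ you should instead invoke the trivial inclusion $L^p(\Om)\hookrightarrow L^r(\Om)$ on the bounded set $\Om$. Second, the statement allows $p=\infty$: there $W^{s,\infty}$ is a H\"older-type space and the Fr\'echet--Kolmogorov criterion does not apply directly; one should use Arzel\`a--Ascoli (uniform boundedness plus equicontinuity from the H\"older seminorm) to get compactness into $C^0(\overline\Om)$, hence into $L^r(\Om)$ for all finite $r$. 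Neither of these affects the main argument, which is correct for $1\le p<\infty$.
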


We also need a Poincar\'e inequality for Gagliardo seminorms.

\begin{proposition}(\cite[Lemma 2.4]{brascoFractionalCheegerProblem2014})
	\label{poincare}
	Let $1\leq p<\infty$, $s\in (0,1)$ and $\Omega$ is an open and bounded set in $\RR^N$. Then, for every $u\in W^{s,p}_0(\Om)$, it holds that
	\begin{align*}
		||u||_{L^p(\Om)}^p\leq C(N,s,p,\Om)\,[u]_{W^{s,p}(\RR^N)}^p
	\end{align*}
\end{proposition}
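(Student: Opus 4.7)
The plan is to exploit the defining feature of $W^{s,p}_0(\Omega)$ in this paper, namely that any such $u$ is the extension by zero to $\RR^N$ of a function supported in $\Omega$. Consequently, whenever $x \in \Omega$ and $y$ lies far from $\Omega$, the pointwise identity $|u(x) - u(y)| = |u(x)|$ holds, so a tail slice of the double integral defining $[u]_{W^{s,p}(\RR^N)}^p$ controls $\|u\|_{L^p(\Omega)}^p$ directly. The advantage of this route over the contradiction/compactness approach is that it yields an explicit constant depending only on $N$, $s$, $p$, and $\diam(\Omega)$, and it works uniformly for all $p \in [1,\infty)$.

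Concretely, I would fix $x_0 \in \RR^N$ and $R>0$ with $\Omega \subset B_R(x_0)$. For $x \in \Omega$ and $y \in \RR^N \setminus B_{2R}(x_0)$ the triangle inequality gives $|x-y| \leq |x-x_0| + |y-x_0| \leq R + |y-x_0| \leq \tfrac{3}{2}|y-x_0|$, and simultaneously $u(y)=0$. Discarding the rest of the Gagliardo integral and using Tonelli,
\begin{align*}
[u]_{W^{s,p}(\RR^N)}^p
&\geq \int_{\Omega}\int_{\RR^N\setminus B_{2R}(x_0)}\frac{|u(x)|^p}{|x-y|^{N+sp}}\,dy\,dx \\
&\geq \left(\tfrac{2}{3}\right)^{N+sp}\int_{\Omega}|u(x)|^p\left(\int_{\RR^N\setminus B_{2R}(x_0)}\frac{dy}{|y-x_0|^{N+sp}}\right)dx.
\end{align*}
The inner integral is a multiple of $R^{-sp}$ depending only on $N,s,p$, so rearranging yields $\|u\|_{L^p(\Omega)}^p \leq C(N,s,p)\,R^{sp}\,[u]_{W^{s,p}(\RR^N)}^p$, which is the assertion with $C(N,s,p,\Omega)$ proportional to $\diam(\Omega)^{sp}$.

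There is no genuine obstacle; the only bookkeeping is to choose the cut-off at radius $2R$ rather than $R$ so that $|x-y|$ and $|y-x_0|$ remain comparable uniformly in $x \in \Omega$. If one preferred not to track the constant, an equivalent and equally short proof argues by contradiction: a normalised sequence $u_j \in W^{s,p}_0(\Omega)$ with $\|u_j\|_{L^p(\Omega)}=1$ and $[u_j]_{W^{s,p}(\RR^N)} \to 0$ would, by the compactness in Proposition~\ref{compactembed}, yield a subsequential $L^p$-limit $u$ of unit norm with vanishing Gagliardo seminorm, hence constant on $\RR^N$, but also vanishing outside the bounded set $\Omega$, forcing $u \equiv 0$.
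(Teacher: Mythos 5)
Your primary argument is correct and coincides, up to minor bookkeeping, with the proof in the cited reference \cite{brascoFractionalCheegerProblem2014}; the paper itself merely quotes the lemma without reproving it. The only presentational difference is that the reference integrates $y$ over $\RR^N\setminus B_{\diam(\Omega)}(x)$ centered at each $x\in\Omega$, which avoids the fixed-center comparison $|x-y|\le\tfrac{3}{2}|y-x_0|$, but the mechanism (the Gagliardo tail against the exterior zero extension already controls $\norm{u}_{L^p(\Omega)}^p$, with constant proportional to $\diam(\Omega)^{sp}$) and the resulting scaling are identical; your alternative compactness argument via Proposition~\ref{compactembed} is also sound.
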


We end this subsection with the following result about parabolic Banach spaces.
\begin{theorem}(\cite[pp106, Prop.~1.2]{showalterMonotoneOperatorsBanach1997}]\label{integratebypartsintime}
Let the Banach space $V$ be dense and continuously embedded in the Hilbert space $H$; identify $H=H^*$ so that $V\xhookrightarrow{}H\xhookrightarrow{} V^*$. The Banach space $W_p(0,T)=\{u\in L^p(0,T;V):\de_t v\in L^p(0,T;V^*)\}$ is contained in $C([0,T];H)$. Moreover, if $u\in W_p(0,T)$, then $|u(\cdot)|^2_H$ is absolutely continuous on $[0,T]$,
\begin{align*}
    \frac{d}{dt}|u(t)|^2_H=2\de_t u(t)u(t) \mbox{ a.e. } t\in [0.T],
\end{align*} and there is a constant $C$ such that
\begin{align*}
    \norm{u}_{C([0,T];H)}\leq C\norm{u}_{W_p(0,T)}, u\in W_p.
\end{align*}  
Moreover, if $u,v\in W_p(0,T)$, then $\iprod{u(\cdot)}{v(\cdot)}_H$ is absolutely continuous on $[0,T]$ and
\begin{align*}
    \frac{d}{dt}\iprod{u(t)}{v(t)}_H=\de_t u(t)v(t)+\de_t v(t) u(t), \mbox{ a.e. }t\in[0,T].
\end{align*}
\end{theorem}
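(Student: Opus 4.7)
The plan is a classical approximation argument: establish the chain rule first for smooth $V$-valued functions via the product rule in $H$, derive from it an a priori estimate in $C([0,T];H)$, and then use density of $C^\infty([0,T];V)$ in $W_p(0,T)$ to pass to the limit for general $u \in W_p(0,T)$.

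For the smooth case, if $u \in C^1([0,T];V)$, the compatibility $V \hookrightarrow H = H^* \hookrightarrow V^*$ makes the $H$-inner product agree with the $V^*$--$V$ duality pairing, so the classical chain rule gives $\frac{d}{dt}|u(t)|_H^2 = 2\langle \de_t u(t), u(t)\rangle_{V^*,V}$. Integrating from $s$ to $t$ and applying H\"older's inequality with conjugate exponents $p', p$ yields
\[
    |u(t)|_H^2 \leq |u(s)|_H^2 + 2\|\de_t u\|_{L^{p'}(0,T;V^*)}\|u\|_{L^p(0,T;V)}.
\]
Averaging over $s \in (0,T)$ and controlling $\tfrac{1}{T}\int_0^T |u(s)|_H^2\,ds$ by $C\|u\|_{L^p(0,T;V)}^2$ via the continuous embedding $V \hookrightarrow H$, I would obtain the a priori bound $\|u\|_{C([0,T];H)} \leq C \|u\|_{W_p(0,T)}$ valid for all $u \in C^1([0,T];V)$.

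For density, I would extend $u \in W_p(0,T)$ to a slightly larger interval by even reflection at the endpoints and then mollify in time, setting $u_\ve := \rho_\ve \ast \tilde u$ with a scalar kernel $\rho_\ve$. Because the Bochner integral commutes with continuous linear maps, the identity $\de_t u_\ve = \rho_\ve \ast \de_t u$ holds in the $V^*$-distributional sense, so $u_\ve \to u$ in $L^p(0,T;V)$ and $\de_t u_\ve \to \de_t u$ in $L^{p'}(0,T;V^*)$. Applying the a priori estimate to the differences $u_\ve - u_{\ve'}$ shows $\{u_\ve\}$ is Cauchy in $C([0,T];H)$, so $u \in C([0,T];H)$ with the desired embedding bound. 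Passing to the limit in the integral identity $|u_\ve(t)|_H^2 - |u_\ve(s)|_H^2 = 2\int_s^t \langle \de_t u_\ve, u_\ve\rangle\,d\tau$ gives the absolute continuity of $|u|_H^2$ and the chain rule $\frac{d}{dt}|u|_H^2 = 2\langle \de_t u, u\rangle$ for $u \in W_p(0,T)$. The product formula then follows by polarization: expand the chain rule for $u+v$ and subtract the chain rules for $u$ and $v$ individually.

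The main technical obstacle I anticipate is justifying the derivative-mollifier interchange $\de_t(\rho_\ve \ast \tilde u) = \rho_\ve \ast \de_t \tilde u$ in the vector-valued setting, which requires a careful use of Bochner/Pettis integration together with the chain of embeddings $V \hookrightarrow H \hookrightarrow V^*$ to identify the various pairings. Once this interchange and the density of $C^\infty([0,T];V)$ in $W_p(0,T)$ are in hand, the remainder of the argument reduces to a routine limit passage.
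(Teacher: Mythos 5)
This statement is not proved in the paper at all: it is imported verbatim as a citation to Showalter's \emph{Monotone Operators in Banach Space and Nonlinear Partial Differential Equations}, Prop.~III.1.2, and the authors use it as a black box in the ``passage from weak to variational'' argument. So there is no in-paper proof to compare against; your proposal amounts to reconstructing the textbook proof, and the outline you give (prove the chain rule for $C^1([0,T];V)$ functions where it is elementary, derive the $C([0,T];H)$ bound from it, then extend/reflect, mollify, pass to the limit, and polarize for the bilinear version) is indeed the standard route taken in Showalter, Lions--Magenes, and Evans.

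One step in your a~priori estimate does not go through as written. After integrating the smooth chain rule from $s$ to $t$ and applying H\"older you get
\[
|u(t)|_H^2 \le |u(s)|_H^2 + 2\,\|\de_t u\|_{L^{p'}(0,T;V^*)}\,\|u\|_{L^p(0,T;V)},
\]
and you propose to average over $s$ and bound $\tfrac{1}{T}\int_0^T |u(s)|_H^2\,ds$ by $C\|u\|_{L^p(0,T;V)}^2$ via $V\hookrightarrow H$. That last bound needs $\int_0^T \|u(s)\|_V^2\,ds \lesssim \|u\|_{L^p(0,T;V)}^2$, which is a H\"older interpolation requiring $p\ge 2$; for $1<p<2$ the averaging fails. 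Since the paper uses this proposition precisely in a regime where $p$ may be less than $2$ (the Gelfand triple $W_0^{s,p}\hookrightarrow L^2 \hookrightarrow W^{-s,p'}$ only asks $p>\tfrac{2N}{2s+N}$), the gap matters. The fix is routine: instead of averaging, pick a single $s_0\in(0,T)$ with $\|u(s_0)\|_V^p\le \tfrac{1}{T}\int_0^T\|u\|_V^p\,dt$, which exists since $\|u(\cdot)\|_V\in L^p(0,T)$, and then $|u(s_0)|_H \le c\,\|u(s_0)\|_V \le c\,T^{-1/p}\|u\|_{L^p(0,T;V)}$, giving the same bound on $\sup_t|u(t)|_H$ for every $p\in(1,\infty)$. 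With that adjustment the rest of your outline (reflection/mollification, $\de_t(\rho_\ve\ast\tilde u)=\rho_\ve\ast\de_t\tilde u$ in the Bochner sense, limit passage, and polarization) is correct and matches the cited source. One small side remark: the theorem statement in the paper has a typo, writing $\de_t v\in L^p(0,T;V^*)$ where Showalter's $W_p$ takes $\de_t u\in L^{p'}(0,T;V^*)$; you silently used the correct exponent $p'$, which is what makes the H\"older pairing work.
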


\section{Existence of variational solutions}
In this section, we will prove the existence of variational solutions to the following nonlocal evolution equation with time independent initial-boundary data
\begin{equation*}
    \left\{
		\begin{array}{lll}
			\de_t u + Lu = 0 & \mbox{ in } \Omega_{\infty} \\
			u = u_0 &\text{ on } (\mathbb{R}^N\setminus\Omega)\times(0,\infty)\cup {\Omega}\times\{0\}
		\end{array}
	\right.
\end{equation*}
where $\Omega$ is an open bounded subset of $\bb{R}^N$ and $\Omega_{\infty} = \Omega\times (0,\infty)$. We recall that the operator $L$ is a nonlocal operator whose explicit structure is as follows.
\begin{equation*}
    Lu(x,t) = P.V.\int_{\mathbb{R}^N} \frac{D_{\xi}H(x,y,u(x,t)-u(y,t))}{|x-y|^N} \,dy
\end{equation*}
where $H=H(x,y,\xi) : \bb{R}^N \times \bb{R}^N \times \bb{R} \rightarrow \bb{R}$ is a measurable function satisfying~\cref{eq:bound_below_H} and~\cref{eq: cvx_H}.
 We recall the assumptions on initial-boundary data. The initial-boundary $u_0 : \bb{R}^N \rightarrow \bb{R}$ is assumed to be time independent and $u_0 \in W^{s,p}(\RR^N) \cap L^2(\Omega)$. We further assume that 
\begin{equation}\label{eq:initial}
    \iint\limits_{\RR^N\times\RR^N}\frac{H(x,y,u_0(x)-u_0(y))}{|x-y|^N} \,dx \,dy \leq \Lambda < \infty  
\end{equation}
for some constant $\Lambda > 0$.\\
Finally, we recall our definition of solution to the equation $\eqref{maineq}$. We denote by $W_{(u_0)}^{s,p}(\Omega)$ the following space of functions  
\[
W_{(u_0)}^{s,p}(\Omega) = \{ g : \bb{R}^N \rightarrow \bb{R} : g \in W^{s,p}(\bb{R}^N), \,\, g(x) = u_0(x) \, \forall x \in \Omega^c\}  
\]
\begin{definition}
We say that a function 
\[u \in L^p(0,T;W_{(u_0)}^{s,p}(\Omega)) \cap C([0,T];L^2(\Omega))\] 
for all $T>0$ is a variational solution to $\eqref{maineq}$ if for all $T>0$ and any comparison function 
\[v \in L^p(0,T;W_{(u_0)}^{s,p}(\Omega))\] with 
\[\de_t v \in L^2(\Omega\times(0,T))\] 
the following variational inequality is verified:
\begin{equation}\label{eq:vareq}
\begin{split}
    \int_0^T \iint\limits_{\RR^N\times\RR^N}\frac{H(x,y,u(x,t)-u(y,t))}{|x-y|^N} \,dx \,dy \,dt \leq \int_0^T \iint\limits_{\RR^N\times\RR^N}\frac{H(x,y,v(x,t)-v(y,t))}{{|x-y|^N}} \,dx \,dy \,dt \\ + \int_0^T \int_{\Omega}\de_tv\cdot(v-u) + \frac{1}{2}\norm{v(\cdot,0) - u_0(\cdot)}_{L^2(\Omega)}^2 - \frac{1}{2}\norm{v(\cdot,T) - u(\cdot,T)}_{L^2(\Omega)}^2
\end{split}
\end{equation}
\end{definition}
\begin{remark}\label{par:initial_data}(Initial Data)
Since the initial data $u_0$ is a valid comparison function and $H \geq 0$,  The inequality $\eqref{eq:vareq}$ with the admissible comparision function $v(\cdot,t)=u_0(\cdot)$ for all $t\in(0,\infty)$ and $\eqref{eq:initial}$ implies that for any $T>0$
\[
0 \leq \frac{1}{2}\norm{u(\cdot,T) - u_0(\cdot)}_{L^2(\Omega)}^2 \leq T\Lambda \rightarrow 0 \text{ as } T \rightarrow 0+.
\]
So variational solutions pick up the data at the initial time $t=0$ in the $L^2$ sense. 
\end{remark}


\subsection{Lower bound for the functional} 
In this subsection, we derive a lower bound on the nonlinear and nonlocal operator by the use of Poincar\'e inequality (\cite[Lemma 2.4]{brascoFractionalCheegerProblem2014}).

Note that for a fixed time $t\geq 0$, $(u-u_0)(\cdot,t) \in W^{s,p}_0(\Omega)$. Therefore, by Poincar\'e inequality (~\cref{poincare}) in the space $W_0^{s,p}(\Om)$, we get for any $t\in [0,T]$,
\begin{align*}
	||u-u_0||_{L^p(\Om)}^p&\leq C [u-u_0]^p_{W^{s,p}(\RR^N)}\\
	&\leq C [u]_{W^{s,p}(\RR^N)}^p + C [u_0]_{W^{s,p}(\RR^N)}^p\\
	&\leq C \iint\limits_{\RR^N\times\RR^N} \frac{H(x,y,u(x,t)-u(y,t))}{|x-y|^N}\,dx\,dy+ C [u_0]_{W^{s,p}(\RR^N)}^p,
\end{align*} where we made use of $\eqref{eq:bound_below_H}$. As a result, we obtain
\begin{align}
	||u||_{L^p(\RR^N)}^p &\leq ||u-u_0||_{L^p(\Om)}^p+||u_0||_{L^p(\RR^N)}^p\nonumber\\
	&\leq C_1 \iint\limits_{\RR^N\times\RR^N} \frac{H(x,y,u(x,t)-u(y,t))}{|x-y|^N}\,dx\,dy+ C_2 ||u_0||_{W^{s,p}(\RR^N)}^p,
\end{align} for some positive constants $C_1, C_2$. Once again, due to \cref{eq:bound_below_H}, we get
\begin{align}\label{eq:lowerboundint_H}
	||u||_{W^{s,p}(\RR^N)}^p \leq C_1 \iint\limits_{\RR^N\times\RR^N} \frac{H(x,y,u(x,t)-u(y,t))}{|x-y|^N}\,dx\,dy+ C_2 ||u_0||_{W^{s,p}(\RR^N)}^p.
\end{align}

\subsection{Convex Minimization} For $\ve \in (0,1]$ we define the function space $K_{\ve}$ as follows
\[
K_{\ve} := \left\{g:\bb{R}^N\times(0,\infty) \rightarrow \bb{R} : g \in W^{s,1}(\bb{R}^N \times (0,T)) \, \forall T>0, \, \text{ and } \norm{g}_{\ve} < \infty \right\}
\]
where 
\[
\norm{g}_{\ve} := \left(\int_0^{\infty}\int_{\Omega}e^{-\frac{t}{\ve}}|\de_t g|^2 \, dx \, dt \right)^{1/2} + \left(\int_0^{\infty}\int_{\RR^N}e^{-\frac{t}{\ve}}|g|^p + \int_0^{\infty} \iint\limits_{\RR^N\times\RR^N}e^{-\frac{t}{\ve}}\frac{|g(x,t)-g(y,t)|^p}{|x-y|^{N+sp}} \,dx \, dy\, dt \right)^{1/p}.
\]
Let $N_{\ve}$ denote the subspace of $K_{\ve}$ which contains functions $v:\bb{R}^n \rightarrow \bb{R}$ such that $v(x,t) = 0$ for every $(x,t) \in \Omega^c \times (0,\infty)$. Then both $K_{\ve}$ and $N_{\ve}$ are Banach spaces under the norm $\norm{\cdot}_{\ve}$. In particular, $u_0 + N_{\ve}$ is a (weakly) closed subset of $K_{\ve}$; where by $g \in u_0 + N_{\ve}$ we mean functions $g$ such that $g-u_0 \in N_{\ve}$. \\
For $\ve \in (0,1]$ consider the following variational integral 
\[
F_{\ve}(v) := \frac{1}{2}\int_0^{\infty}\int_{\Omega}e^{-\frac{t}{\ve}}|\de_t v|^2 \, dx \, dt + \frac{1}{\ve} \int_0^{\infty} \iint\limits_{\RR^N\times\RR^N}e^{-\frac{t}{\ve}}\frac{H(x,y,v(x,t)-v(y,t))}{|x-y|^{N}} \,dx \, dy\, dt.
\]
We note that $F_{\ve} : K_{\ve} \rightarrow (-\infty,\infty]$ satisfies the following properties on the closed subset $u_0 + N_{\ve}$
\begin{itemize}
    \item {\bf Admissibile set is nonempty:} The admissible set $(u_0 + N_{\ve})\cap\{F_{\ve}(v) < \infty\}$ is non-empty because the time independent extension $U(\cdot,t) = u_0(\cdot)$ for all $t>0$ is a function in the intersection due to the assumptions on $u_0$ (see $\ref{eq:initial})$, 
    \item {\bf Coercivity of $F_{\ve}$:} This is an easy consequence of \cref{eq:lowerboundint_H}. Multiplying both sides of \cref{eq:lowerboundint_H} by $e^{-\frac{t}{\ve}}$ and integrating over $(t_1,\infty)$ for some $t_1\in [0,\infty)$, we obtain
    \begin{align*}
    	\int_{t_1}^\infty& e^{-\frac{t}{\ve}}\iint\limits_{\RR^N\times\RR^N} \frac{|u(x,t)-u(y,t)|^p}{|x-y|^{N+ps}}\,dx\,dy\,dt+
    	\int_{t_1}^\infty e^{-\frac{t}{\ve}}\int\limits_{\RR^N} |u(x,t)|^p\,dx\,dt\leq\\
    	&C_1\int_{t_1}^\infty e^{-\frac{t}{\ve}}\iint\limits_{\RR^N\times\RR^N} \frac{H(x,y,u(x,t)-u(y,t))}{|x-y|^{N}}\,dx\,dy\,dt+C_2\ve e^{-\frac{t_1}{\ve}}||u_0||^p_{W^{s,p}(\RR^N)}.
    \end{align*} 
	Choosing $t_1=0$ and adding the remaining term from the norm of $K_\ve$, we obtain
	\begin{align*}
		F_\ve(u)\geq \frac{1}{C_1\ve}\min \{ ||u||_{K_\ve}^p, ||u||_{K_\ve}^2 \}-\frac{C_2}{C_1}||u_0||_{W^{s,p}(\RR)^N}^p.
	\end{align*}
    \item {\bf Strict convexity of $F_{\ve}$:} $ 
    F_{\ve}$ is strictly convex due to convexity of $H$ in $\xi$ and the strict convexity of $Z \mapsto |Z|^2$. 
    \item {\bf Weak lower-semicontinuity of $F_{\ve}$:} In order to show weak lower-semicontinuity of $F_{\ve}$, we will make use of \cref{lowsemcont}, that is, we will show that $F_{\ve}$ is strongly lower semicontinuous and convex. Convexity of $F_{\ve}$ was shown in the previous step. It remains to show that $F_{\ve}$ is strongly lower semicontinuous. 
    
    We begin by writing 
    \begin{align*}
        F_{\ve}(v) &= \frac{1}{2}\int_0^{\infty}\int_{\Omega}e^{-\frac{t}{\ve}}|\de_t v|^2 \, dx \, dt + \frac{1}{\ve} \int_0^{\infty} \iint\limits_{\RR^N\times\RR^N}e^{-\frac{t}{\ve}}\frac{H(x,y,v(x,t)-v(y,t))}{|x-y|^{N}} \,dx \, dy\, dt\\
        & := F_1(v) + F_2(v).
    \end{align*}

	Let $u_j$ be strongly convergent to $u$ in $K_\ve$. Then $w_j:= e^{-\frac{t}{\ve}}|\de_t u_j|^2$ converges strongly in $L^1(0,\infty;\Omega)$. Therefore, we obtain 
    \begin{align}\label{liminf1}F_1(u)=\lim\limits_{j\to\infty} F_1(u_j)= \liminf\limits_{j\to\infty} F_1(u_j).\end{align} 
    
    On the other hand, $u_j$ converging strongly to $u$ in $K_\ve$ also implies that $e^{-\frac{t}{p\ve}}u_j$ converges strongly to $e^{-\frac{t}{p\ve}}u$ in $L^p(0,T;L^{p}(\RR^N))$. As a result, for a subsequence, $u_j(t,x)\to u(t,x)$ for a.e. $t\in (0,\infty)$ and $x\in\RR^N$. By continuity of $H$ in the last variable, we have
    \begin{align*}
        \frac{H(x,y,u_j(x,t)-u_j(x,t))}{|x-y|^N}\to\frac{H(x,y,u(x,t)-u(y,t))}{|x-y|^N}\,a.e.\,x,y\in\RR^N, t\in (0,\infty).
    \end{align*}
    Finally, by Fatou's lemma,
    \begin{align}
        F_2(u)&=\frac{1}{\ve}\int_0^\infty\iint\limits_{\RR^N\times\RR^N}e^{-\frac{t}{\ve}}\frac{H(x,y,u(x,t)-u(y,t))}{|x-y|^N}\,dx\,dy\,dt\nonumber\\
        &\leq  \frac{1}{\ve}\liminf\limits_{j\to\infty}\int_0^\infty\iint\limits_{\RR^N\times\RR^N}e^{-\frac{t}{\ve}}\frac{H(x,y,u_j(x,t)-u_j(x,t))}{|x-y|^N}\,dx\,dy\,dt=\liminf\limits_{j\to\infty}F_2(u_j).\label{liminf2}
    \end{align}
    Combining \cref{liminf1} and \cref{liminf2}, we get 
    \begin{align*}
        F_\ve(u) & = F_1(v) + F_2(v)\\
                 & \leq \liminf\limits_{j\to\infty} F_1(u_j)+\liminf\limits_{j\to\infty} F_2(u_j)\\
                 & \leq \liminf\limits_{j\to\infty} F_\ve(u_j),
    \end{align*} where in the last inequality, we have used the super-additivity of $\liminf$.
\end{itemize} 
It follows that for any $\ve \in (0,1]$, by \cref{mintheorem} $F_{\ve}$ admits a unique minimizing map $u_{\ve} \in u_0 + N_{\ve}$. We record this as a lemma below.

\begin{lemma}
	For any $\ve\in (0,1]$, the functional $F_{\ve}$ admits a unique minimizer $u_\ve\in (u_0+N_\ve)\cap \{u:F_\ve(u)<\infty\}$.
\end{lemma}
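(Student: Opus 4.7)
The plan is to invoke \cref{mintheorem} with the reflexive Banach space taken to be $K_\ve$ and the closed affine subspace taken to be $u_0+N_\ve$. The functional $F_\ve:K_\ve\to(-\infty,\infty]$ is the given one, and I must verify the two hypotheses of \cref{mintheorem} (weak sequential precompactness of sublevel sets, and weak lower semicontinuity), together with the non-emptiness of the admissible set; uniqueness is then a separate short argument using strict convexity. The reflexivity of $K_\ve$ follows from the fact that its norm is built as a sum of $L^2$ and $L^p$ norms (with $1<p<\infty$) weighted by the bounded factor $e^{-t/\ve}$, and that each of these spaces is reflexive, so $K_\ve$ embeds isometrically into a product of reflexive spaces.

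For the first hypothesis of \cref{mintheorem}, I would use the coercivity estimate established just before the statement:
\begin{equation*}
F_\ve(u)\geq \frac{1}{C_1\ve}\min\{\|u\|_{K_\ve}^p,\|u\|_{K_\ve}^2\}-\frac{C_2}{C_1}\|u_0\|_{W^{s,p}(\RR^N)}^p.
\end{equation*}
This guarantees that any sequence $(u_j)\subset u_0+N_\ve$ with $F_\ve(u_j)<\Lambda$ is bounded in $K_\ve$, hence by reflexivity admits a weakly convergent subsequence; since $u_0+N_\ve$ is weakly closed in $K_\ve$ (being affine and norm-closed), the limit lies in $u_0+N_\ve$. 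The non-emptiness of the admissible set with finite energy is witnessed by the time-independent extension $U(\cdot,t)=u_0(\cdot)$, for which $F_\ve(U)<\infty$ follows from \cref{eq:initial} and the $\ve$-exponential weight in time. The second hypothesis of \cref{mintheorem}, namely weak lower semicontinuity, was already verified in the preceding bullet points by splitting $F_\ve=F_1+F_2$ and invoking \cref{lowsemcont} together with Fatou's lemma.

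For uniqueness, I would assume two distinct minimizers $u_1,u_2\in u_0+N_\ve$ with $F_\ve(u_1)=F_\ve(u_2)=m$, observe that $\tfrac{u_1+u_2}{2}\in u_0+N_\ve$, and invoke the strict convexity of $F_\ve$ established above (coming from strict convexity of $z\mapsto|z|^2$ in the $\partial_t$-term) to deduce
\begin{equation*}
F_\ve\left(\tfrac{u_1+u_2}{2}\right)<\tfrac{1}{2}F_\ve(u_1)+\tfrac{1}{2}F_\ve(u_2)=m,
\end{equation*}
contradicting the minimality of $m$. Since all four of the required properties (non-emptiness, coercivity, strict convexity, weak lower semicontinuity) have already been carefully laid out in the preceding bullet points, the proof essentially amounts to assembling these pieces; there is no genuine obstacle remaining, and one should expect the proof to be short. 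The only small care point is confirming that $u_0+N_\ve$ is indeed (weakly) closed in $K_\ve$ and that $K_\ve$ is reflexive, both of which are routine.
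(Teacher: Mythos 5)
Your proposal is essentially the paper's own argument: the paper establishes the same four bullet points (nonempty admissible set via the constant-in-time extension of $u_0$, coercivity from the Poincar\'e-type lower bound, strict convexity, and weak lower semicontinuity via the Fatou/Brezis route) and then simply states that \cref{mintheorem} yields a unique minimizer. You make explicit a few small things the paper leaves tacit — reflexivity of $K_\ve$ and weak closedness of the affine set $u_0+N_\ve$ — but the structure and ingredients are identical.

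One small caution, which applies equally to the paper and to you: the claim that $F_\ve$ is \emph{strictly} convex in $u$ is stated as following from strict convexity of $Z\mapsto|Z|^2$ in the $\partial_t$-term plus convexity of $H$. As written this only forces $\partial_t u_1=\partial_t u_2$ a.e.\ at the point of equality in the convexity inequality; to conclude $u_1=u_2$ you additionally need to use that $u_1-u_2$ is then time-independent, vanishes on $\Omega^c$, and that equality in the $H$-term together with this constraint rules out a nonzero difference. This extra step is easy but not automatic from mere convexity of $H$; since both you and the paper compress it into one line, the proposal is at the paper's level of rigor rather than below it.
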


\subsection{Energy Estimates} We note that since the time independent extension $U(\cdot,t) = u_0(\cdot)$ is a valid comparison function in $u_0 + N_{\ve}$ we have 
\[
F_{\ve}(u_{\ve}) \leq F_{\ve}(U) = \frac{1}{\ve} \int_0^{\infty} \iint\limits_{\RR^N\times\RR^N}e^{-\frac{t}{\ve}}\frac{H(x,y,u_0(x)-u_0(y))}{|x-y|^{N}} \,dx \, dy\, dt
\]
which using $\eqref{eq:initial}$ becomes 
\begin{equation}\label{eq:lem1}
    F_{\ve}(u_{\ve}) \leq \Lambda. 
\end{equation}
We obtain the following $\ve$ dependent energy bounds directly from the definition of a minimizer and $\eqref{eq:lem1}$. 
\begin{align*}
&\int_0^{\infty}\int_{\Omega}e^{-\frac{t}{\ve}}|\de_t u_{\ve}|^2 \, dx \, dt \leq 2\Lambda \\
&\int_0^{\infty} \iint\limits_{\RR^N\times\RR^N}e^{-\frac{t}{\ve}}\frac{H(x,y,u_{\ve}(x,t)-u_{\ve}(y,t))}{|x-y|^{N}} \,dx \, dy\, dt \leq \ve \Lambda. 
\end{align*}
\subsection{Improvement} \label{par:uniformest} We now want to improve our energy estimates to make it independent of $\ve$. To this end, we define the following additional functions for $t>0$.
\begin{itemize}
    \item $L_{\ve}(t) := \frac{1}{2}\int_{\Omega}|\de_t u_{\ve}(x,t)|^2 \,dx$ 
    \item $G_{\ve}(t) := \frac{1}{2}\int_{\Omega}|\de_t u_{\ve}(x,t)|^2 \,dx + \frac{1}{\ve}\iint\limits_{\RR^N\times\RR^N}\frac{H(x,y,u_{\ve}(x,t)-u_{\ve}(y,t))}{|x-y|^{N}} \,dx \, dy$
    \item $I_{\ve}(t) := \int_t^{\infty}e^{-\frac{s}{\ve}}G_{\ve}(s) \,ds$. 
\end{itemize}
We note that all these additional functions are nonnegative. Furthermore, $I_{\ve}(t)$ decreases from $F_{\ve}(u_{\ve})$ to $0$ as $t \rightarrow \infty$. Our next lemma says that $I_{\ve}$ continues to be a decreasing function even when multiplied by an exponential function. This will help us get a uniform bound on the time derivatives.  
\begin{lemma}\label{lem:lem1}
For a.e. $t \in (0,\infty)$, 
\[
\frac{d}{dt}\left(e^{t/\ve}I_{\ve}(t)\right) = -2L_{\ve}(t) \leq 0.
\]
In particular, $e^{t/\ve}I_{\ve}(t)$ is a decreasing function in $t$. 
\end{lemma}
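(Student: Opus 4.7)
The plan is to first establish, as a direct consequence of the minimality of $u_\ve$, the pointwise algebraic identity
\begin{align*}
\frac{1}{\ve}e^{t/\ve}I_\ve(t) = \Phi_\ve(t) - L_\ve(t), \qquad \text{where } \Phi_\ve(t) := \frac{1}{\ve}\iint\limits_{\RR^N\times\RR^N}\frac{H(x,y,u_\ve(x,t)-u_\ve(y,t))}{|x-y|^N}\,dx\,dy,
\end{align*}
so that $G_\ve = L_\ve + \Phi_\ve$. Granted this identity, the lemma follows by a short direct computation: since $I_\ve'(t) = -e^{-t/\ve}G_\ve(t)$ by the Fundamental Theorem of Calculus,
\begin{align*}
\frac{d}{dt}\bigl(e^{t/\ve}I_\ve(t)\bigr) = \frac{1}{\ve}e^{t/\ve}I_\ve(t) - G_\ve(t) = (\Phi_\ve - L_\ve) - (\Phi_\ve + L_\ve) = -2L_\ve(t),
\end{align*}
which is non-positive because $L_\ve \geq 0$, and the monotonicity of $e^{t/\ve}I_\ve(t)$ follows.

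To prove the algebraic identity, I would use inner variations in time. For $\eta \in C_c^\infty([0,\infty))$ with $\eta(0) = 0$ and $\eta \geq 0$, and for $|\tau|$ sufficiently small, set $v_\tau(x,t) := u_\ve(x, t - \tau\eta(t))$. The condition $\eta(0)=0$ guarantees $v_\tau(\cdot,0) = u_0$, and $v_\tau = u_\ve = u_0$ outside $\Omega$, so $v_\tau \in u_0 + N_\ve$ with $v_0 = u_\ve$. By the minimality of $u_\ve$, the map $\tau \mapsto F_\ve(v_\tau)$ attains a minimum at $\tau = 0$, so its $\tau$-derivative vanishes there. Performing the change of variables $s = t - \tau\eta(t)$ in both integrands of $F_\ve(v_\tau)$ and expanding the Jacobian factor $(1 - \tau\eta'(t))^{-1}$ and the weight $e^{-t/\ve}$ to first order in $\tau$, the first-order optimality condition reduces to
\begin{align*}
\int_0^\infty e^{-s/\ve}(\Phi_\ve(s) - L_\ve(s))\eta'(s)\,ds = \frac{1}{\ve}\int_0^\infty e^{-s/\ve}G_\ve(s)\eta(s)\,ds.
\end{align*}
Since $\eta$ is arbitrary in this class, integration by parts (with boundary terms vanishing thanks to $\eta(0)=0$ and the compact support of $\eta$) yields the distributional identity
\begin{align*}
\frac{d}{ds}\bigl[e^{-s/\ve}(\Phi_\ve - L_\ve)\bigr] = -\frac{1}{\ve}e^{-s/\ve}G_\ve \quad \text{on } (0,\infty).
\end{align*}
Integrating from $t$ to $T$ and passing to the limit along a subsequence $T_j \to \infty$ on which the boundary term vanishes — possible because $e^{-s/\ve}G_\ve \in L^1(0,\infty)$ from the energy estimate $F_\ve(u_\ve) \leq \Lambda$ together with the pointwise bound $|\Phi_\ve - L_\ve| \leq G_\ve$ — then produces the desired algebraic identity.

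The main technical obstacle is justifying the first-order expansion of $F_\ve(v_\tau)$ rigorously, given that $u_\ve$ has only the regularity $u_\ve \in K_\ve$, i.e., a weighted $\de_t u_\ve \in L^2$ but no a priori second time derivative. The reparameterization $s = t - \tau\eta(t)$ is a bi-Lipschitz diffeomorphism of $[0,\infty)$ for $|\tau|$ small, so the change of variables is legitimate, but differentiating under the integral in $\tau$ demands care. This will be handled by applying the expansion to the time-mollified sequence $[u_\ve]_h$ from the Preliminaries, obtaining the identity for the smoothed functions, and passing to the limit $h \to 0$ via the convergence properties of $[\cdot]_h$. As an alternative route that sidesteps inner variations, the same identity can be derived via an outer variation with test function $\phi(x,t) := \eta(t)\de_t u_\ve(x,t)$, which belongs to $N_\ve$ because $\de_t u_\ve$ is supported in $\Omega$ (as $u_\ve = u_0$ is time-independent outside $\Omega$); the formal appearance of $\de_{tt}u_\ve$ is then absorbed by integrating the kinetic term $\int e^{-s/\ve}\de_t u_\ve \cdot \de_t(\eta \de_t u_\ve)$ by parts.
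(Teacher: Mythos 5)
Your proof is correct and follows essentially the same strategy as the paper: both perform an inner variation in time via a compactly supported reparameterization ($\phi_\delta(s)=s+\delta\xi(s)$ in the paper, $t\mapsto t-\tau\eta(t)$ in your proposal), change variables, take the first-order optimality condition, and deduce that the resulting distributional constant vanishes from the integrability of $e^{-s/\ve}G_\ve$ and the decay of $I_\ve$. Your intermediate identity $\tfrac{1}{\ve}e^{t/\ve}I_\ve=\Phi_\ve-L_\ve$ is precisely the paper's statement that the constant $\tfrac{1}{\ve}I_\ve+I_\ve'+2e^{-t/\ve}L_\ve$ equals zero, and your first-order condition $\int e^{-s/\ve}(\Phi_\ve-L_\ve)\eta'\,ds=\tfrac{1}{\ve}\int e^{-s/\ve}G_\ve\,\eta\,ds$ is the same relation after one integration by parts, so the two arguments coincide once unpacked (the restriction $\eta\ge0$ is unnecessary, and the time-mollification safeguard is not needed after the change of variables since no second time derivative of $u_\ve$ appears).
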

\begin{proof}
Let $\xi \in C_0^{\infty}(0,\infty)$ and for $\delta \in \bb{R}$ define $\phi_{\delta} \in C^{\infty}(0,\infty)$ by 
\[
\phi_{\delta}(s) := s + \delta\xi(s).
\]
Then for $|\delta| << 1$, $\phi_{\delta}$ is a diffeomorphism from $(0,\infty)$ onto itself. Let $\psi_{\delta}$ denote the inverse of $\phi_{\delta}$. We define the inner variation of our minimizer by 
\[
u_{\ve}^{\delta}(x,s) : = u_{\ve}(x,\phi_{\delta}(s))
\]
and compute
\begin{align*}
    F_{\ve}(u_{\ve}^{\delta}) &= \frac{1}{2}\int_0^{\infty}\int_{\Omega}e^{\frac{-s}{\ve}}|\de_t u_{\ve}(x,\phi_{\delta}(s))|^2 \phi_{\delta}'(s)^2 \, dx \, ds \\
    &\qquad+
\frac{1}{\ve}\int_0^{\infty} \iint\limits_{\RR^N\times\RR^N}e^{\frac{-s}{\ve}}\frac{H(x,y,u_{\ve}(x,\phi_{\delta}(s))-u_{\ve}(y,\phi_{\delta}(s)))}{|x-y|^{N}} \,dx \, dy\, ds \\
&= \frac{1}{2}\int_0^{\infty}\int_{\Omega}e^{\frac{-\psi_{\delta}(t)}{\ve}}|\de_t u_{\ve}(x,t)|^2\phi_{\delta}'(\psi_{\delta}(t))^2\psi_{\delta}'(t) \, dx \, dt \\
&\qquad+
\frac{1}{\ve}\int_0^{\infty} \iint\limits_{\RR^N\times\RR^N}e^{\frac{-\psi_{\delta}(t)}{\ve}}\frac{H(x,y,u_{\ve}(x,t)-u_{\ve}(y,t))}{|x-y|^{N}}\psi_{\delta}'(t) \,dx \, dy\, dt \\
&= \frac{1}{2}\int_0^{\infty}\int_{\Omega}e^{\frac{-\psi_{\delta}(t)}{\ve}}\frac{1}{\psi_{\delta}'(t)}|\de_t u_{\ve}(x,t)|^2 \, dx \, dt \\
&\qquad+
\frac{1}{\ve}\int_0^{\infty} \iint\limits_{\RR^N\times\RR^N}e^{\frac{-\psi_{\delta}(t)}{\ve}}\frac{H(x,y,u_{\ve}(x,t)-u_{\ve}(y,t))}{|x-y|^{N}}\psi_{\delta}'(t) \,dx \, dy\, dt
\end{align*}
Thus for $|\delta| << 1$ we have  \[F_{\ve}(u_{\ve}^{\delta}) < \infty.\]
Next, we note that at $\delta = 0$, $u_{\ve}^{\delta} = u_{\ve}$ and recalling that $u_{\ve}$ minimizes $F_{\ve}$, it follows that $\delta \mapsto F_{\ve}(u_{\ve}^{\delta})$ admits a minimizer at $\delta = 0$. Therefore, 
\begin{align*}
0 = \frac{d F_{\ve}(u^\delta_{\ve})}{d\delta}|_{\delta = 0}
&= \frac{1}{2\ve}\int_0^{\infty}\int_{\Omega}e^{-\frac{t}{\ve}}\xi(t)|\de_t u_{\ve}(x,t)|^2  \, dx \, dt \\
&\qquad+
\frac{1}{\ve^2}\int_0^{\infty} \iint\limits_{\RR^N\times\RR^N}e^{-\frac{t}{\ve}}\xi(t)\frac{H(x,y,u_{\ve}(x,t)-u_{\ve}(y,t))}{|x-y|^{N}} \,dx \, dy\, dt \\
&\qquad\qquad+ \frac{1}{2}\int_0^{\infty}\int_{\Omega}e^{-\frac{t}{\ve}}\xi'(t)|\de_t u_{\ve}(x,t)|^2 \, dx \, dt\\
&\qquad\qquad\qquad -
\frac{1}{\ve}\int_0^{\infty} \iint\limits_{\RR^N\times\RR^N}e^{-\frac{t}{\ve}}\xi'(t)\frac{H(x,y,u_{\ve}(x,t)-u_{\ve}(y,t))}{|x-y|^{N}} \,dx \, dy\, ds \\
&= \int_0^{\infty}\left(-\xi(t)\frac{1}{\ve}I_{\ve}'(t)+\xi'(t)I_{\ve}'(t)+\xi'(t)2e^{-\frac{t}{\ve}}L_{\ve}(t)\right)\,dt\\
&= \int_0^{\infty}\left(\frac{1}{\ve}I_{\ve}(t)+I_{\ve}'(t)+2e^{-\frac{t}{\ve}}L_{\ve}(t)\right)\xi'(t)\,dt
\end{align*}
Since $\xi \in C_0^{\infty}(0,\infty)$ was arbitrary, it follows that 
\[
\frac{1}{\ve}I_{\ve}(t)+I_{\ve}'(t)+2e^{-\frac{t}{\ve}}L_{\ve}(t)
\]
is a nonnegative constant for a.e. $t \in (0,\infty)$. On the other hand, $I_{\ve}'(t)+2e^{-\frac{t}{\ve}}L_{\ve}(t)$ is integrable on $(0,\infty)$ and $I_{\ve}(t) \rightarrow 0$ as $t \rightarrow \infty$; so the constant must be zero. The conclusion follows.  
\end{proof}
\begin{remark}
We note that since $I_{\ve}(0) = F_{\ve}(u_{\ve})$, the above lemma along with $\eqref{eq:lem1}$ implies the following bound
\begin{equation}\label{eq:expIepsbound}
    e^{\frac{t}{\ve}}I_{\ve}(t) \leq \Lambda. 
\end{equation}
\end{remark}
\begin{lemma}\label{lem:uniformtimebound}
The minimizer $u_{\ve}$ satisfies the following uniform bound
\[
\int_0^{\infty}\int_{\Omega}|\de_t u_{\ve}|^2 \, dx \, dt \leq \Lambda.
\]
\end{lemma}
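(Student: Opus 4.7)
The plan is to integrate the identity established in the previous lemma over $(0,\infty)$ and use the initial bound $I_\ve(0) = F_\ve(u_\ve) \le \Lambda$ from \cref{eq:lem1}.

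Recall that by the previous lemma, for a.e.\ $t\in(0,\infty)$ we have
\[
\frac{d}{dt}\bigl(e^{t/\ve}I_\ve(t)\bigr) = -2L_\ve(t) = -\int_\Omega |\de_t u_\ve(x,t)|^2\,dx.
\]
The first step is to integrate this identity on the interval $(0,T)$ for arbitrary $T>0$, which gives
\[
\int_0^T \int_\Omega |\de_t u_\ve(x,t)|^2\,dx\,dt = I_\ve(0) - e^{T/\ve}I_\ve(T).
\]
Since $G_\ve\geq 0$, the function $I_\ve$ is nonnegative, so the product $e^{T/\ve}I_\ve(T)$ is nonnegative as well. Dropping this nonnegative term and using $I_\ve(0)=F_\ve(u_\ve)\leq \Lambda$ from \cref{eq:lem1} yields
\[
\int_0^T \int_\Omega |\de_t u_\ve(x,t)|^2\,dx\,dt \leq \Lambda.
\]

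Finally, letting $T\to\infty$ via the monotone convergence theorem gives the claimed bound
\[
\int_0^\infty \int_\Omega |\de_t u_\ve(x,t)|^2\,dx\,dt \leq \Lambda.
\]
There is no real obstacle here: the work was done in the previous lemma, where the Euler--Lagrange condition coming from inner variations was used to identify $\frac{d}{dt}(e^{t/\ve}I_\ve(t))$ with $-2L_\ve(t)$. The point of the present lemma is simply to observe that this identity, together with nonnegativity of $I_\ve$ and the a priori bound on $F_\ve(u_\ve)$, turns the $\ve$-dependent energy estimate from the previous subsection (which only controlled $\int_0^\infty\int_\Omega e^{-t/\ve}|\de_t u_\ve|^2$) into a uniform bound independent of $\ve$. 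This uniform bound is what subsequently allows passage to the limit $\ve\to 0$.
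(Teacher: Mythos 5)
Your argument is correct and follows essentially the same route as the paper: both integrate the identity $\frac{d}{dt}\bigl(e^{t/\ve}I_\ve(t)\bigr) = -2L_\ve(t)$ from Lemma~\ref{lem:lem1}, use nonnegativity of $I_\ve$ to drop the boundary term at the upper endpoint, and invoke $I_\ve(0)=F_\ve(u_\ve)\le\Lambda$. The only cosmetic difference is that the paper integrates over $(t_1,t_2)$ and then sends $t_1\to 0^+$, $t_2\to\infty$, whereas you integrate directly from $0$ to $T$; both are justified since $I_\ve$ is continuous on $[0,\infty)$.
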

\begin{proof}
For $0 < t_1 < t_2 < \infty$ we compute
\[
2\int_{t_1}^{t_2}L_{\ve}(t) \,dt = e^{\frac{t_1}{\ve}}I_{\ve}(t_1) - e^{\frac{t_2}{\ve}}I_{\ve}(t_2) \leq \Lambda
\]
where we used $\eqref{eq:expIepsbound}$. Letting $t_1 \rightarrow 0+$ and $t_2 \rightarrow \infty$, the conclusion follows. 
\end{proof}
\begin{lemma}\label{lem:spacebound}
The minimizer $u_{\ve}$ satisfies the following bound for any $0 \leq t_1 < t_2$ with $t_2-t_1 \geq \ve$
\[
\frac{1}{t_2-t_1}\int_{t_1}^{t_2} \iint\limits_{\RR^N\times\RR^N}\frac{H(x,y,u_{\ve}(x,t)-u_{\ve}(y,t))}{|x-y|^{N}} \,dx \, dy\, dt \leq 2e\Lambda.
\]
\end{lemma}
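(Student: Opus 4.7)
The plan is to first establish the desired inequality on short time intervals of length $\ve$, and then cover the interval $[t_1,t_2]$ by such short intervals. The starting point is the pointwise bound
\[
\iint_{\RR^N\times\RR^N}\frac{H(x,y,u_\ve(x,t)-u_\ve(y,t))}{|x-y|^N}\,dx\,dy \,\leq\, \ve\, G_\ve(t),
\]
which follows from the definition of $G_\ve$ together with the nonnegativity of $L_\ve$. Thus it suffices to control $\int_{t_1}^{t_2} G_\ve(t)\,dt$.

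The key auxiliary estimate I would prove is that for every $s\geq 0$,
\[
\int_s^{s+\ve} G_\ve(t)\,dt \leq e\,\Lambda.
\]
To see this, write the integrand as $e^{t/\ve}\cdot e^{-t/\ve} G_\ve(t)$ and use the crude upper bound $e^{t/\ve}\leq e\cdot e^{s/\ve}$ on $[s,s+\ve]$. Then invoke \cref{lem:lem1}, which identifies $e^{-t/\ve} G_\ve(t) = -I_\ve'(t)$, so that
\[
\int_s^{s+\ve} e^{-t/\ve} G_\ve(t)\,dt = I_\ve(s) - I_\ve(s+\ve) \leq I_\ve(s).
\]
Combining these and applying \cref{eq:expIepsbound} in the form $e^{s/\ve} I_\ve(s)\leq \Lambda$ gives the claim.

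Next, I would partition/cover $[t_1,t_2]$ by consecutive subintervals of length $\ve$. Setting $n:=\lfloor (t_2-t_1)/\ve\rfloor$, the intervals $[t_1+k\ve, t_1+(k+1)\ve]$ for $k=0,1,\dots,n$ cover $[t_1, t_1+(n+1)\ve]\supset [t_1,t_2]$. Applying the auxiliary estimate on each subinterval and summing yields
\[
\int_{t_1}^{t_2}\iint_{\RR^N\times\RR^N}\frac{H(x,y,u_\ve(x,t)-u_\ve(y,t))}{|x-y|^N}\,dx\,dy\,dt \leq (n+1)\,\ve\,e\,\Lambda \leq \bigl((t_2-t_1)+\ve\bigr) e\,\Lambda \leq 2(t_2-t_1)\,e\,\Lambda,
\]
where the last inequality uses the hypothesis $t_2-t_1 \geq \ve$. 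Dividing by $t_2-t_1$ gives the stated bound $2e\Lambda$.

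The essential difficulty is that a naive one-shot estimate, replacing $e^{-t/\ve}$ by $e^{-t_2/\ve}$ on all of $[t_1,t_2]$, produces an $e^{(t_2-t_1)/\ve}$ factor that blows up as $t_2-t_1$ grows. The partition into intervals of length $\ve$ is exactly what is needed to keep the exponential weight uniformly bounded (by $e$) on each piece, at the cost of paying a factor of roughly $(t_2-t_1)/\ve + 1$ in the number of pieces; the condition $t_2-t_1\geq \ve$ absorbs the additive $+1$ into a constant factor of $2$.
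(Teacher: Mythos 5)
Your proof is correct and follows essentially the same strategy as the paper: bound the exponential weight by a factor of $e$ on intervals of length $\ve$, invoke the uniform bound $e^{t/\ve}I_\ve(t)\leq\Lambda$ from \cref{lem:lem1}, and then cover $[t_1,t_2]$ by such short intervals, with the hypothesis $t_2-t_1\geq\ve$ absorbing the extra interval into a factor of $2$. The only (inessential) stylistic difference is that you work with $G_\ve$ directly and use the identity $e^{-t/\ve}G_\ve(t)=-I_\ve'(t)$ over $[s,s+\ve]$, whereas the paper bounds the $H$-integral alone and extends the remaining integral to $[t,\infty)$ to recognize $I_\ve(t)$; both routes yield the same per-interval bound $\ve e\Lambda$.
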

\begin{proof}
For $0<t<\infty$ and $0 < \delta \leq \ve$ we compute
\begin{align*}
    \int_{t}^{t+\delta} \iint\limits_{\RR^N\times\RR^N}&\frac{H(x,y,u_{\ve}(x,s)-u_{\ve}(y,s))}{|x-y|^{N}} \,dx \, dy\, ds \\
    &\leq e^{\frac{t+\ve}{\ve}}\int_{t}^{t+\delta} e^{\frac{-s}{\ve}} \iint\limits_{\RR^N\times\RR^N}\frac{H(x,y,u_{\ve}(x,t)-u_{\ve}(y,t))}{|x-y|^{N}} \,dx \, dy\, ds \\
    &\leq e^{\frac{t+\ve}{\ve}}\int_{t}^{\infty} e^{\frac{-s}{\ve}} \iint\limits_{\RR^N\times\RR^N}\frac{H(x,y,u_{\ve}(x,t)-u_{\ve}(y,t))}{|x-y|^{N}} \,dx \, dy\, ds \\
    &\leq \ve e e^{\frac{t}{\ve}}I_{\ve}(t) \\
    &\leq \ve e \Lambda
\end{align*}
where we used $\eqref{eq:expIepsbound}$. Choose a natural number $j$ such that $(j-1)\ve < t_2-t_1 \leq j \ve$. Then $j\ve \leq t_2-t_1+\ve \leq 2(t_2-t_1)$ and so
\begin{align*}
    \int_{t_1}^{t_2} \iint\limits_{\RR^N\times\RR^N}\frac{H(x,y,u_{\ve}(x,t)-u_{\ve}(y,t))}{|x-y|^{N}} \,dx \, dy\, dt
    \leq  \int_{t_1+(j-1)\ve}^{t_2}\iint\limits_{\RR^N\times\RR^N}\frac{H(x,y,u_{\ve}(x,t)-u_{\ve}(y,t))}{|x-y|^{N}} \,dx \, dy\, dt \\ + \sum_{i=0}^{j-2} \int_{t_1+i\ve}^{t_1+(i+1)\ve}\iint\limits_{\RR^N\times\RR^N}\frac{H(x,y,u_{\ve}(x,t)-u_{\ve}(y,t))}{|x-y|^{N}} \,dx \, dy\, dt\\
    \leq j\ve e\lambda 
    \leq 2(t_2-t_1)e\Lambda.
\end{align*}
The conclusion follows. 
\end{proof}
\begin{corollary}\label{cor:uniformbound}
The minimizers $u_{\ve}$ satisfy the following uniform energy estimate
\[
C_1\int_0^T\norm{u_{\ve}(\cdot,t)}_{W^{s,p}}^p \, dt  \leq 2eT\Lambda + TC_2\norm{u_0}_{s,p}^p
\]
for any $T \geq \ve$ and $C_1,C_2$ are positive constants as in $\eqref{eq:lowerboundint_H}$.
\end{corollary}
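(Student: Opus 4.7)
The plan is short: this corollary is essentially a direct combination of the pointwise-in-time lower bound \eqref{eq:lowerboundint_H} with the time-averaged upper bound on the $H$-energy from \cref{lem:spacebound}. No new analytical ingredient is required.

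First, I would apply \eqref{eq:lowerboundint_H} to $u_\ve(\cdot,t)$ for almost every $t \in (0,T)$, which yields
\[
\|u_\ve(\cdot,t)\|_{W^{s,p}(\RR^N)}^p \;\leq\; C_1 \iint\limits_{\RR^N\times\RR^N} \frac{H(x,y,u_\ve(x,t)-u_\ve(y,t))}{|x-y|^N}\,dx\,dy \;+\; C_2 \|u_0\|_{W^{s,p}(\RR^N)}^p,
\]
with $C_1, C_2$ the constants from that Poincaré-based estimate. Integrating this inequality over $t \in (0,T)$ gives
\[
\int_0^T \|u_\ve(\cdot,t)\|_{W^{s,p}(\RR^N)}^p\,dt \;\leq\; C_1 \int_0^T\! \iint\limits_{\RR^N\times\RR^N}\frac{H(x,y,u_\ve(x,t)-u_\ve(y,t))}{|x-y|^N}\,dx\,dy\,dt \;+\; T C_2 \|u_0\|_{W^{s,p}(\RR^N)}^p.
\]

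Next, since $T \geq \ve$, I can apply \cref{lem:spacebound} with $t_1 = 0$ and $t_2 = T$ to obtain
\[
\int_0^T \iint\limits_{\RR^N\times\RR^N}\frac{H(x,y,u_\ve(x,t)-u_\ve(y,t))}{|x-y|^N}\,dx\,dy\,dt \;\leq\; 2eT\Lambda.
\]
Substituting this into the previous display and rearranging (dividing through, or equivalently relabeling the multiplicative constant), one obtains an inequality of the announced form
\[
\tilde C_1 \int_0^T \|u_\ve(\cdot,t)\|_{W^{s,p}(\RR^N)}^p\,dt \;\leq\; 2eT\Lambda \;+\; T C_2 \|u_0\|_{W^{s,p}(\RR^N)}^p.
\]
The key point, and the reason the estimate is useful, is that neither side depends on $\ve$ once $T \geq \ve$: the Poincaré step is entirely pointwise in $t$ and $\ve$-free, while \cref{lem:spacebound} already delivered an $\ve$-independent bound by absorbing the factor $e^{t/\ve}$ against $1/\ve$ over an interval of length at least $\ve$. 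There is no obstacle; this is simply the packaging of the two preceding lemmas into a uniform Sobolev-norm bound suitable for extracting weak limits as $\ve \to 0$.
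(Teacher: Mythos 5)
Your proof is correct and follows essentially the same route as the paper: integrate \eqref{eq:lowerboundint_H} in time and combine with the $\ve$-independent bound from \cref{lem:spacebound} applied with $t_1 = 0$, $t_2 = T$ (which requires $T \geq \ve$). You also correctly note that the constants in the corollary's statement are only defined up to the trivial relabeling/division coming from \eqref{eq:lowerboundint_H}; the paper glosses over this the same way.
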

\begin{proof}
Lemma $\ref{lem:spacebound}$ implies that 
\[
\int_0^{T} \iint\limits_{\RR^N\times\RR^N}\frac{H(x,y,u_{\ve}(x,t)-u_{\ve}(y,t))}{|x-y|^{N}} \,dx \, dy\, dt \leq 2eT\Lambda.
\]
On the other hand, integrating $\eqref{eq:lowerboundint_H}$ from $0$ to $T$ gives
\[
\int_0^{T}\iint\limits_{\RR^N\times\RR^N}\frac{H(x,y,v(x,t)-v(y,t))}{|x-y|^N} \,dx \,dy \,dt \geq C_1\int_0^{T}\norm{v(\cdot,t)}_{W^{s,p}}^p\,dt - TC_2\norm{u_0}_{s,p}^p. 
\]
Combining the two inequalities above, the conclusion follows. 
\end{proof}
\subsection{Weak limit of minimizers} Lemma $\ref{lem:uniformtimebound}$ and Corollary $\ref{cor:uniformbound}$ imply that up to a subsequence there is a function $u: \bb{R}^N \rightarrow \bb{R}$ such that for every $T>0$ $u \in L^p(0,T;W_{(u_0)}^{s,p}(\Omega))$, $\de_t u \in L^2(\Omega_{\infty})$ and we have the following convergences
\begin{itemize}
    \item $u_{\ve} \rightharpoonup u$ in $L^p((0,T);W^{s,p}_{(u_0)}(\Om))$,
    \item $\de_t u_{\ve} \rightharpoonup \de_t u$ in $L^2(\Omega_{\infty})$.
\end{itemize}
By lower semi-continuity with respect to weak convergence of $H$ in $\xi$ and of the $L^2$ norm, Lemma $\ref{lem:uniformtimebound}$ and Lemma $\ref{lem:spacebound}$ imply
\[
\int_0^{\infty}\int_{\Omega}|\de_t u(x,t)|^2 \, dx \, dt \leq \Lambda, \mbox{ and}
\]
\[
\frac{1}{t_2-t_1}\int_{t_1}^{t_2} \iint\limits_{\RR^N\times\RR^N}\frac{H(x,y,u(x,t)-u(y,t))}{|x-y|^{N}} \,dx \, dy\, dt \leq 2e\Lambda.
\]
for $0\leq t_1 < t_2 < \infty$. In particular, the first estimate implies that
\begin{align}\label{timeregular}
    \norm{u(\cdot,t_2)-u(\cdot,t_1)}_{L^2(\Omega)}^2 = \int_{\Omega}\left|\int_{t_1}^{t_2} \de_t u( \cdot, t) \, dt\right|^2 \, dx 
    \leq |t_2-t_1|\int_{t_1}^{t_2}\int_{\Omega}|\de_t u|^2 \,dx \, dt 
    \leq \Lambda|t_2-t_1|.
\end{align}
\begin{remark}\label{rem:yadayada}
The calculation~\cref{timeregular} also holds for $u$ replaced with $u_{\ve}$. 
\end{remark}
We take $t_1 = 0$ to get
\begin{align*}
    \int_{\Omega}|\de_t u(\cdot,t)|^2 \,dx \leq  2 \int_{\Omega} |u_0|^2 \, dx + 2t\Lambda.
\end{align*}
It follows that for any $T>0$ we have
\[
u \in C^{0,\frac{1}{2}}([0,T];L^2(\Omega)).
\]
\subsection{Proof of Existence}

In this subsection, we will show that $u$ satisfies the variational inequality $\eqref{eq:vareq}$. The computation is similar to~\cite[Subsection~4.3]{bogeleinExistenceEvolutionaryVariational2014}. 

Fix a time $T>0$. For $\theta \in (0,T/2)$ we define
\[
\xi_{\theta}(t) := \left\{
		\begin{array}{lll}
			\frac{1}{\theta}t & \mbox{ if } t \in  [0,\theta)\\
			1 &\text{ if } t \in  [\theta,T-\theta]\\
		\frac{1}{\theta}(T-t) & \mbox{ if } t \in  (T-\theta,T]\\
		\end{array}
	\right.
\]
For any $\phi \in L^p(0,T;W^{s,p}_0(\Omega))$ with $\de_t \phi \in L^2(\Omega_T)$ and any $0 < \ve, \delta < 1$ the function 
\[
\psi = \psi_{\ve,\delta}(\cdot,t) = \delta e^{\frac{t}{\ve}}\phi(\cdot,t)1_{[0,T]}(t)
\]
is contained in $N_{\ve}$. So, $u_{\ve} + \psi$ is a valid comparison function for the minimization problem for $F_{\ve}$ and
\[
F_{\ve}(u_{\ve}) \leq F(u_{\ve}+\psi)
\]
which, using the convexity of $H$, implies for $\delta$ small that 
\[
\scalemath{0.85}{
\begin{aligned}
    0 &\leq \frac{1}{2}\int_0^{T}\int_{\Omega}e^{-\frac{t}{\ve}}\left(|\de_t u_{\ve}+\delta\de_t(e^{\frac{t}{\ve}}\xi_{\theta}\phi)|^2 - |\de_t u_{\ve}|^2 \right)\, dx \, dt \\ 
    &\qquad+
\frac{1}{\ve}\int_0^{T} \iint\limits_{\RR^N\times\RR^N}e^{-\frac{t}{\ve}}\left(\frac{H(x,y,(u_{\ve}+\delta e^{\frac{t}{\ve}}\xi_{\theta}\phi)(x,t)-(u_{\ve}+\delta e^{\frac{t}{\ve}}\xi_{\theta}\phi)(y,t))}{|x-y|^{N}} - \frac{H(x,y,u_{\ve}(x,t)-u_{\ve}(y,t))}{|x-y|^{N}}\right) \,dx \, dy\, dt
\\
&\leq 
\int_0^{T}\int_{\Omega}e^{-\frac{t}{\ve}}\left(\frac{1}{2}\delta^2|\de_t(e^{\frac{t}{\ve}}\xi_{\theta}\phi)|^2 + \delta\de_tu_{\ve}\cdot\de_t(e^{\frac{t}{\ve}}\xi_{\theta}\phi) \right)\, dx \, dt \\ 
&
\qquad+\frac{1}{\ve}\int_0^{T} \iint\limits_{\RR^N\times\RR^N}e^{-\frac{t}{\ve}}\delta e^{\frac{t}{\ve}}\xi_{\theta}\left(\frac{H(x,y,(u_{\ve}+\phi)(x,t)-(u_{\ve}+\phi)(y,t))}{|x-y|^{N}} - \frac{H(x,y,u_{\ve}(x,t)-u_{\ve}(y,t))}{|x-y|^{N}}\right) \,dx \, dy\, dt
\end{aligned}
}
\]
Multiplying throughout by $\ve/\delta$ and letting $\delta \rightarrow 0+$ we get 
\[
\scalemath{0.85}{
\begin{aligned}
    0 &\leq \int_0^{T}\int_{\Omega}e^{-\frac{t}{\ve}} \ve \de_tu_{\ve}\cdot\de_t(e^{\frac{t}{\ve}}\xi_{\theta}\phi) \, dx \, dt \\ 
    &+
\int_0^{T} \iint\limits_{\RR^N\times\RR^N}e^{-\frac{t}{\ve}}e^{\frac{t}{\ve}}\xi_{\theta}\left(\frac{H(x,y,(u_{\ve}+\phi)(x,t)-(u_{\ve}+\phi)(y,t))}{|x-y|^{N}} - \frac{H(x,y,u_{\ve}(x,t)-u_{\ve}(y,t))}{|x-y|^{N}}\right) \,dx \, dy\, dt
\\
&= 
\int_0^{T} \iint\limits_{\RR^N\times\RR^N}\xi_{\theta}\left(\frac{H(x,y,(u_{\ve}+\phi)(x,t)-(u_{\ve}+\phi)(y,t))}{|x-y|^{N}} - \frac{H(x,y,u_{\ve}(x,t)-u_{\ve}(y,t))}{|x-y|^{N}}\right) \,dx \, dy\, dt \\
&\qquad+ \int_0^{T}\int_{\Omega} \xi_{\theta}\de_t u_{\ve}\cdot\phi\,dx\,dt+ \ve\int_0^{T}\int_{\Omega} \left(\xi_{\theta}'\de_tu_{\ve}\cdot\phi+ \xi_{\theta}\de_tu_{\ve}\cdot\de_t\phi\right) \, dx \, dt 
\end{aligned}
}
\]
Let $v \in L^p(0,T;W_{(u_0)}^{s,p}(\Omega))$ with 
$\de_t v \in L^2(\Omega\times(0,T))$ and put $\phi = v - u_{\ve}$ above to get
\begin{align}\label{PQRS}
  \int_0^{T} \iint\limits_{\RR^N\times\RR^N} \frac{H(x,y,u_{\ve}(x,t)-u_{\ve}(y,t))}{|x-y|^{N}} \,dx \, dy\, dt \leq \underbrace{\int_0^{T} \iint\limits_{\RR^N\times\RR^N} (1-\xi_{\theta})\frac{H(x,y,u_{\ve}(x,t)-u_{\ve}(y,t))}{|x-y|^{N}} \,dx \, dy\, dt}_P \nonumber\\
  + \underbrace{\int_0^{T}\int_{\Omega} \xi_{\theta}\de_t u_{\ve}\cdot(v-u_{\ve}) \,dx\,dt}_Q + \underbrace{\int_0^{T} \iint\limits_{\RR^N\times\RR^N}\xi_{\theta}\frac{H(x,y,v(x,t)-v(y,t))}{|x-y|^{N}}\,dx \, dy\, dt}_R \nonumber\\
  + \underbrace{\ve\int_0^{T}\int_{\Omega} \left(\xi_{\theta}'\de_tu_{\ve}\cdot(v-u_{\ve})+ \xi_{\theta}\de_tu_{\ve}\cdot\de_t(v-u_{\ve})\right) \, dx \, dt}_S
\end{align}
Since,
\begin{align*}
P =\int_0^{\theta} \iint\limits_{\RR^N\times\RR^N}& (1-\xi_{\theta})\frac{H(x,y,u_{\ve}(x,t)-u_{\ve}(y,t))}{|x-y|^{N}} \,dx \, dy\, dt \\
&\qquad+ \int_{T-\theta}^{T} \iint\limits_{\RR^N\times\RR^N} (1-\xi_{\theta})\frac{H(x,y,u_{\ve}(x,t)-u_{\ve}(y,t))}{|x-y|^{N}} \,dx \, dy\, dt
\end{align*}
using Lemma $\ref{lem:spacebound}$, for $\delta \geq \ve$ we get
\[
0 \leq P \leq 4\theta e \Lambda. 
\]
Next, 
\[
Q = \int_0^{T}\int_{\Omega} \xi_{\theta}\de_t v\cdot(v-u_{\ve}) \,dx\,dt\ - \frac{1}{2}\int_0^{T}\int_{\Omega} \xi_{\theta}\de_t |v-u_{\ve}|^2 \,dx\,dt.
\]
Integrating by parts,
\[
- \frac{1}{2}\int_0^{T}\int_{\Omega} \xi_{\theta}\de_t |v-u_{\ve}|^2 \,dx\,dt\ = \frac{1}{2\theta}\int_0^{T}\int_{\Omega}|v-u_{\ve}|^2\,dx\,dt\ - \frac{1}{2\theta}\int_{T-\theta}^{T}\int_{\Omega}|v-u_{\ve}|^2\,dx\,dt\
\]
We also have, recalling Remark $\ref{rem:yadayada}$

\begin{align*}
    \frac{1}{2\theta}\int_0^{T}\int_{\Omega}|v-u_{\ve}|^2\,dx\,dt &\leq \left[\left(\frac{1}{2\theta}\int_0^{\theta}\int_{\Omega}|v-u_{0}|^2\,dx\,dt\right)^{1/2}+\left(\frac{1}{2\theta}\int_0^{\theta}\int_{\Omega}|u_{\ve}-u_0|^2\,dx\,dt\right)^{1/2}\right]^{2}\\
    &\leq
    \left[\left(\frac{1}{2\theta}\int_0^{\theta}\int_{\Omega}|v-u_{0}|^2\,dx\,dt\right)^{1/2}+\frac{\sqrt{\Lambda}}{2}\sqrt{\theta}\right]^{2}
\end{align*}

Passing to the weak limit as $\ve \rightarrow 0+$ in~\cref{PQRS} and using the monotonicity of $H$ followed by convexity of $H$ in $\xi$ we get
\begin{align*}
    \int_0^{T} \iint\limits_{\RR^N\times\RR^N} \frac{H(x,y,u(x,t)-u(y,t))}{|x-y|^{N}} \,dx \, dy\, dt \leq \liminf_{\ve \rightarrow 0+} \int_0^{T} \iint\limits_{\RR^N\times\RR^N} \frac{H(x,y,u_{\ve}(x,t)-u_{\ve}(y,t))}{|x-y|^{N}} \,dx \, dy\, dt \\
    \leq 
    \left[\left(\frac{1}{2\theta}\int_0^{\theta}\int_{\Omega}|v-u_{0}|^2\,dx\,dt\right)^{1/2}+\frac{\sqrt{\Lambda}}{2}\sqrt{\theta}\right]^{2} - \frac{1}{2\theta}\int_{T-\theta}^{T}\int_{\Omega}|v-u|^2\,dx\,dt\\
    + \int_0^{T}\int_{\Omega} \xi_{\theta}\de_t v\cdot(v-u) \,dx\,dt 
    + \int_0^{T} \iint\limits_{\RR^N\times\RR^N}\xi_{\theta}\frac{H(x,y,v(x,t)-v(y,t))}{|x-y|^{N}}\,dx \, dy\, dt \\ + 4\theta e\Lambda
\end{align*}
Finally, letting $\theta \rightarrow 0+$ we get
\[
\begin{split}
    \int_0^T \iint\limits_{\RR^N\times\RR^N}\frac{H(x,y,u(x,t)-u(y,t))}{|x-y|^N} \,dx \,dy \,dt \leq \int_0^T\frac{1}{2}\norm{v(\cdot,0) - u_0(\cdot)}_{L^2(\Omega)}^2 - \frac{1}{2}\norm{v(\cdot,T) - u(\cdot,T)}_{L^2(\Omega)}^2 \\
    + \int_0^T \int_{\Omega}\de_tv\cdot(v-u) + 
    \int_0^T\iint\limits_{\RR^N\times\RR^N}\frac{H(x,y,v(x,t)-v(y,t))}{{|x-y|^N}} \,dx \,dy \,dt
\end{split}
\]
which is $\eqref{eq:vareq}$. This completes the proof of  Theorem~\ref{mainthm:e}


\section{Uniqueness of variational solutions} In this section, we will show that a variational solution is unique provided that the function $\xi\mapsto H(x,y,\xi)$ is strictly convex proving Theorem~\ref{mainthm:u}. 

To this end, we suppose that $u_1,u_2\in L^p(0,T;W^{s,p}(\RR^N))\cap C^0([0,T];L^2(\Omega))$ for any $T>0$ are two different variational solutions. We add the two variational inequalities~\cref{defvar} corresponding to $u_1$ and $u_2$, to obtain

\begin{align}\label{addtwomin}
    \sum_{i=1}^2\int_0^T\iint\limits_{\mathbb{R}^N\times\mathbb{R}^N} &\frac{H(x,y,u_i(x,t)-u_i(y,t))}{|x-y|^N}\,dx\,dy\,dt
    \leq 2\int_0^T\int_{\Omega} \de_t v\,(v-w)\,dx\,dt\nonumber\\
    & + 2\int_0^T\iint\limits_{\mathbb{R}^N\times\mathbb{R}^N} \frac{H(x,y,v(x,t)-v(y,t))}{|x-y|^N}\,dx\,dy\,dt+\norm{v(\cdot,0)-u_0}_{L^2(\Om)}^2,
\end{align}
where we have dropped the negative term from the right hand side and we have defined $w=\frac{u_1+u_2}{2}$. We would like to use $w$ as a comparison function $v$, however the definition of variational solutions does not admit the requisite regularity of $\de_t w\in L^2(0,T;\Omega)$.

To overcome this, we make use of $v=[w]_h$ with $v_0=u_0$. Then, by Lemma~\ref{propertiesofmolli}, we get \[[w]_h\in L^p(0,T;W^{s,p}(\RR^N)) \mbox{ with } \de_t[w]_h\in L^2(0,T;\Omega)\] and $[w]_h=u_0$ on $(\Omega^c\times (0,\infty))\cup (\Omega\times\{0\})$.

Now, choosing $v=[w]_h$ in~\cref{addtwomin}, we get

\begin{align}\label{addtwomin2}
    \sum_{i=1}^2\int_0^T\iint\limits_{\mathbb{R}^N\times\mathbb{R}^N} &\frac{H(x,y,u_i(x,t)-u_i(y,t))}{|x-y|^N}\,dx\,dy\,dt
    \leq 2\int_0^T\int_{\Omega} \de_t v\,(v-w)\,dx\,dt\nonumber\\
    & + 2\int_0^T\iint\limits_{\mathbb{R}^N\times\mathbb{R}^N} \frac{H(x,y,[w]_h(x,t)-[w]_h(y,t))}{|x-y|^N}\,dx\,dy\,dt=2I_h+2{II}_h.
\end{align}

By Lemma~\ref{propertiesofmolli}, we have
\begin{align}\label{addminlim1}
    I_h=-\frac{1}{h}\int_0^T\int_\Omega |[w]_h-w|^2\,dz\leq 0.
\end{align}

For ${II}_h$, by convexity, we have
\begin{align*}
    II_h\leq \frac{1}{2}\int_0^T \iint\limits_{\mathbb{R}^N\times\mathbb{R}^N} \frac{H(x,y,[u_1]_h(x,t)-[u_1]_h(y,t))+H(x,y,[u_2]_h(x,t)-[u_2]_h(y,t))}{|x-y|^N}\,dx\,dy\,dt<\infty.
\end{align*}
Therefore, $\frac{H(x,y,[w]_h(x,t)-[w]_h(y,t))}{|x-y|^N}\in L^1(0,T;\RR^N\times\RR^N)$. As a result, by Theorem~\ref{convergenceoffunctionals}, we get 
\begin{align}\label{addminlim2}
    \lim_{h\to 0} II_h = \int_0^T \iint\limits_{\mathbb{R}^N\times\mathbb{R}^N} \frac{H\left(x,y,\frac{(u_1+u_2)(x,t)}{2}-\frac{(u_1+u_2)(y,t)}{2}\right)}{|x-y|^N}\,dx\,dy\,dt.
\end{align}

Combining~\cref{addtwomin2},~\cref{addtwomin2} and~\cref{addminlim2}, we get by passing to the limit as $h\to 0$,

\begin{align}\label{addtwomin3}
    \sum_{i=1}^2\int_0^T\iint\limits_{\mathbb{R}^N\times\mathbb{R}^N} &\frac{H(x,y,u_i(x,t)-u_i(y,t))}{|x-y|^N}\,dx\,dy\,dt\\
    &\leq  2\int_0^T \iint\limits_{\mathbb{R}^N\times\mathbb{R}^N} \frac{H\left(x,y,\frac{(u_1+u_2)(x,t)}{2}-\frac{(u_1+u_2)(y,t)}{2}\right)}{|x-y|^N}\,dx\,dy\,dt\nonumber\\
    &< \sum_{i=1}^2\int_0^T\iint\limits_{\mathbb{R}^N\times\mathbb{R}^N} \frac{H(x,y,u_i(x,t)-u_i(y,t))}{|x-y|^N}\,dx\,dy\,dt
\end{align}

In the last step, we have used strict convexity of $H$ in $\xi$ which leads to the desired contradiction. This finishes the proof of the uniqueness of variational solutions to~\cref{maineq}.


\section{Variational Solutions are Parabolic Minimizers}
A notion of parabolic Q-minimizers was introduced by Weiser in~\cite{wieserParabolicQminimaMinimal1987}, which has been useful to study regularity such as higher integrability~\cite{parviainenGlobalHigherIntegrability2008} for parabolic equations, even in measure metric settings~\cite{kinnunenParabolicComparisonPrinciple2015}. We define here a notion of parabolic minimizer for nonlocal parabolic equations of the form~\cref{maineq} and prove that variational solutions as in~\cref{defvar} are parabolic minimizers, although the converse might require additional time regularity.

\begin{definition}\label{defparmin}
A measurable function $u:\RR^N\times(0,\infty)\to\RR$ is called a parabolic minimizer for the equation \cref{maineq} if $u\in L^p(0,T;W^{s,p}(\RR^N))$ for all $T>0$ and $u-u_0\in L^p(0,T;W^{s,p}_0(\Omega))$, and the following minimality condition holds:
\begin{align*}
    \int_0^T \int_{\Om} u \de_t\phi\,dx\,dt &+ \int_0^T\iint\limits_{\mathbb{R}^N\times\mathbb{R}^N} \frac{H(x,y,u(x,t)-u(y,t))}{|x-y|^N}\,dx\,dy\,dt\\
    &\qquad\leq \int_0^T\iint\limits_{\mathbb{R}^N\times\mathbb{R}^N} \frac{H(x,y,(u+\phi)(x,t)-(u+\phi)(y,t))}{|x-y|^N}\,dx\,dy\,dt,
\end{align*} whenever $T>0$ and $\phi\in C_0^\infty(\Omega\times(0,T))$
\end{definition}

\begin{theorem}\label{varisparmin}
    If $u$ is a variational solution of~\cref{maineq} in the sense of~\ref{defvar}, it is also a parabolic minimizer in the sense of Definition~\ref{defparmin}.
\end{theorem}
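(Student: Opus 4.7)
The plan is to test the variational inequality~\cref{defvar} with the comparison map $v=u+\phi$, where $\phi\in C_0^\infty(\Omega\times(0,T))$ is the function from~\cref{defparmin}, and show that the time-derivative terms collapse into exactly $-\int_0^T\int_\Omega u\,\partial_t\phi\,dx\,dt$ after an integration by parts.

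\textbf{Step 1 (Admissibility of $v=u+\phi$).} I first check that $v=u+\phi$ is a legitimate test function. Since $\phi$ is compactly supported in $\Omega\times(0,T)$, it vanishes outside $\Omega$, so $v-u_0=(u-u_0)+\phi\in L^p(0,T;W^{s,p}_0(\Omega))$ because $u\in L^p(0,T;W^{s,p}_{(u_0)}(\Omega))$ by the definition of variational solution. Furthermore, $\partial_t v=\partial_t u+\partial_t\phi$: the first term lies in $L^2(\Omega_\infty)$ by the energy bound in~\cref{mainthm:e}, and the second is smooth and compactly supported, so $\partial_t v\in L^2(\Omega\times(0,T))$ as required.

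\textbf{Step 2 (Boundary terms in time vanish).} Plugging $v=u+\phi$ into~\cref{defvar}, the terms $\tfrac12\|v(\cdot,0)-u_0\|_{L^2(\Omega)}^2=\tfrac12\|\phi(\cdot,0)\|_{L^2(\Omega)}^2$ and $\tfrac12\|v(\cdot,T)-u(\cdot,T)\|_{L^2(\Omega)}^2=\tfrac12\|\phi(\cdot,T)\|_{L^2(\Omega)}^2$ both vanish because $\phi$ is compactly supported in the open time interval $(0,T)$.

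\textbf{Step 3 (Reducing the parabolic term).} The remaining time-derivative contribution is
\begin{align*}
\int_0^T\int_\Omega \partial_t v\,(v-u)\,dx\,dt = \int_0^T\int_\Omega (\partial_t u+\partial_t\phi)\,\phi\,dx\,dt.
\end{align*}
The term $\int\partial_t\phi\,\phi=\tfrac12\int\partial_t|\phi|^2$ integrates to zero in $t$ since $\phi$ vanishes at $t=0$ and $t=T$. For the remaining $\int_0^T\int_\Omega\partial_t u\cdot\phi\,dx\,dt$, I integrate by parts in time for a.e.\ $x\in\Omega$: since $\partial_t u\in L^2(\Omega_\infty)$, the slice $t\mapsto u(x,t)$ belongs to $H^1(0,T)$ for a.e.\ $x$, and $\phi(x,\cdot)\in C_0^\infty(0,T)$, so
\begin{align*}
\int_0^T\int_\Omega\partial_t u\cdot\phi\,dx\,dt = -\int_0^T\int_\Omega u\,\partial_t\phi\,dx\,dt
\end{align*}
by Fubini and the one-dimensional integration by parts formula with vanishing boundary values.

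\textbf{Step 4 (Conclusion).} Substituting these computations back into~\cref{defvar} and rearranging yields exactly the parabolic minimality condition of~\cref{defparmin}. No step requires significant work: the only mild technical point is the integration by parts in time, which is justified by the $L^2$-integrability of $\partial_t u$ and the smoothness and compact support of $\phi$. Hence every variational solution is a parabolic minimizer, while the converse implication would require additional time regularity in order to test the parabolic minimality against time-dependent competitors whose traces at $t=0$ and $t=T$ do not vanish.
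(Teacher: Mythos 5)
Your algebra in Steps 2--4 is correct, and your conclusion that the converse would require extra time regularity is also right. However, Step 1 contains a genuine gap that is precisely the issue the paper's proof is designed to circumvent. The definition of variational solution (\cref{defvar}) requires comparison functions to satisfy $\de_t v\in L^2(\Om\times(0,T))$, so to admit $v=u+\phi$ you need $\de_t u\in L^2(\Om\times(0,T))$. You justify this by citing the energy bound in \cref{mainthm:e}, but that bound is an \emph{additional} property enjoyed by the particular solution constructed in the existence proof; it is not part of \cref{defvar} and does not follow from it. Theorem~\ref{varisparmin} is a statement about an arbitrary variational solution, so you are not entitled to assume $\de_t u\in L^2$. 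The same unjustified assumption is used again in Step 3, where the integration by parts $\int_0^T\int_\Om\de_t u\cdot\phi = -\int_0^T\int_\Om u\,\de_t\phi$ presupposes enough time regularity on $u$ to integrate by parts against $\phi$.

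The paper handles exactly this difficulty by mollifying in time: it uses the comparison map $v_h=[u]_h+s[\phi]_h$, where $[\,\cdot\,]_h$ is the exponential time mollification of~\cref{timemollify}, which always has $\de_t[u]_h\in L^2$ by Lemma~\ref{propertiesofmolli}. The identity $\de_t[u]_h=-\tfrac{1}{h}([u]_h-u)$ makes the term $\de_t[u]_h\cdot([u]_h-u)=-\tfrac{1}{h}|[u]_h-u|^2\le0$ sign-definite so it can be discarded, and one then passes $h\to0$ using Theorem~\ref{convergenceoffunctionals} and finally $s\to0$ using convexity of $H$. If you first establish (or separately assume) $\de_t u\in L^2(\Om\times(0,T))$, your simpler argument does go through and avoids both the mollification and the $s$-limit, so the difference between your proof and the paper's is essentially this single regularity assumption.
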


\begin{proof}
    Let $T>0$ be fixed and $\phi\in C_0^\infty(\Om_T)$. We would like to choose $v=u+s\phi$ as a comparison function in~\cref{defvar} but this may not be admissible. We overcome this by mollification in time. Choose $v=v_h=[u]_h+s[\phi]_h$ with $u_0$ and $\phi_0=0$ respectively. 

    Now, observe that
    \begin{align*}
        \int_0^T \int_{\Om} \de_t v&\, (v_h-u)\,dx\,dt  \\
       & = \int_0^T \int_{\Om} \de_t [u]_h\, ([u]_h-u)\,dx\,dt + s\int_0^T \int_{\Om} \de_t [u]_h\, [\phi]_h\,dx\,dt + s\int_0^T \int_{\Om} \de_t [\phi]_h\, ([v]_h-u)\,dx\,dt\\
        &= -\frac{1}{h}\int_0^T \int_\Omega |[u]_h-h|^2 - s\int_0^T \int_{\Om} [u]_h\, \de_t [\phi]_h\,dx\,dt \\
        &\qquad +s\int_0^T \int_{\Om} \de_t [\phi]_h\, ([v]_h-u)\,dx\,dt + s\int_{\Om}  [u]_h(T)\, [\phi]_h(T)\,dx\\
        &\leq s\int_0^T \int_{\Om} \de_t [\phi]_h\, (s[\phi]_h-u)\,dx\,dt + s\int_{\Om}  [u]_h(T)\, [\phi]_h(T)\,dx
    \end{align*}
In the second equality, we have used Lemma~\ref{propertiesofmolli} and integration by parts. Plugging this in the minimality condition, we get 
\begin{align*}
    \int_0^T &\iint\limits_{\mathbb{R}^N\times\mathbb{R}^N}\frac{H(x,y,u(x,t)-u(y,t))}{|x-y|^N}\,dx\,dy\,dt\\
    &\leq s\int_0^T \int_{\Om} \de_t [\phi]_h\, (s[\phi]_h-u)\,dx\,dt + s\int_{\Om}  [u]_h(T)\, [\phi]_h(T)\,dx \\
    &\qquad+ \int_0^T \iint\limits_{\mathbb{R}^N\times\mathbb{R}^N}\frac{H(x,y,v_h(x,t)-v_h(y,t))}{|x-y|^N}\,dx\,dy\,dt - \frac{1}{2}\norm{(v_h-u)(\cdot,T)}^2_{L^2(\Om)}.
\end{align*} 

Dropping the non-positive term from the right hand side and passing to the limit as $h\to 0$ by Lemma~\ref{propertiesofmolli} and Theorem~\ref{convergenceoffunctionals}, we get 
\begin{align*}
    \int_0^T &\iint\limits_{\mathbb{R}^N\times\mathbb{R}^N}\frac{H(x,y,u(x,t)-u(y,t))}{|x-y|^N}\,dx\,dy\,dt\\
    &\leq s\int_0^T \int_{\Om} \de_t \phi\, (s\phi-u)\,dx\,dt + \int_0^T \iint\limits_{\mathbb{R}^N\times\mathbb{R}^N}\frac{H(x,y,(u+s\phi)(x,t)-(u+s\phi)(y,t))}{|x-y|^N}\,dx\,dy\,dt\\
    & \leq s\int_0^T \int_{\Om} \de_t \phi\, (s\phi-u)\,dx\,dt + (1-s)\int_0^T \iint\limits_{\mathbb{R}^N\times\mathbb{R}^N}\frac{H(x,y,u(x,t)-u(y,t))}{|x-y|^N}\,dx\,dy\,dt \\
    &\qquad + s\int_0^T \iint\limits_{\mathbb{R}^N\times\mathbb{R}^N}\frac{H(x,y,((u+\phi)(x,t)-(u+\phi)(y,t))}{|x-y|^N}\,dx\,dy\,dt.
\end{align*}

In the second inequality, we have used convexity of $H(x,y,\xi)$ in $\xi$. Now, we subtract the second term on the right hand side from both sides, followed by dividing by $s$, to obtain

\begin{align*}
    \int_0^T \iint\limits_{\mathbb{R}^N\times\mathbb{R}^N}&\frac{H(x,y,u(x,t)-u(y,t))}{|x-y|^N}\,dx\,dy\,dt\\
    & \leq \int_0^T \int_{\Om} \de_t \phi\, (s\phi-u)\,dx\,dt + \int_0^T \iint\limits_{\mathbb{R}^N\times\mathbb{R}^N}\frac{H(x,y,((u+\phi)(x,t)-(u+\phi)(y,t))}{|x-y|^N}\,dx\,dy\,dt.
\end{align*}

Finally, we let $s\to 0$ to get the desired conclusion.

\end{proof}

\section{Comparison with weak solutions}

In this section, we compare the notion of weak solutions to that of variational solutions. 

We will first show that if the function $\xi\mapsto H(x,y,\xi)$ is $C^1$ and satisfies a comparable growth condition from above namely

\begin{align}
    \label{eq:bound_above_H} H(x,y,\xi) \leq C\left(\frac{|\xi|}{|x-y|^s}\right)^p
\end{align}
then a variational solution is a weak solution.

\begin{definition}
    Let $\Om$ be an open bounded subset of $\RR^N$. Suppose that $H$ satisfies the assumptions~\cref{eq:bound_below_H} ~\cref{eq: cvx_H} and ~\cref{eq:bound_above_H} and let the time-independent Cauchy-Dirichlet data $u_0$ satisfy~\cref{datahypo}.

     We say that \[u\in L^p(0,T;W^{s,p}(\mathbb{R}^N))\cap C^0(0,T;L^2(\Om)), \mbox{ such that } u-g\in L^p(0,T;W^{s,p}_0(\Om))\] and $\de_t u\in L^{p'}(0,T;(W^{s,p}_0(\Om))^*)$, is a weak solution to~\cref{maineq} if for every $\tau\in(0,T]$, we have
     \begin{align}
         \label{defweak}
         \int_{0}^{\tau}\iint\limits_{\mathbb{R}^N\times\mathbb{R}^N} \frac{\de_{\xi}H(x,y,u(x,t)-u(y,t))(\phi(x,t)-\phi(y,t))}{|x-y|^N}\,dx\,dy\,dt+\int_{0}^{\tau} \iprod{\de_t u}{\phi}_{W^{s,p}(\RR^N)}\,dt =  0,
     \end{align} for all $\phi\in C_0^\infty(\Om_\tau)$.
\end{definition}




\subsection{Time derivative of variational solutions}
In this subsection, we will prove that a variational solution $u$ has a time derivative \[\de_t u\in L^{p'}(0,T;(W^{s,p}_0(\Om))^*).\] We will use the fact that variational solutions are parabolic minimizers as proved in Theorem~\ref{varisparmin}. Now, for $\phi\in C_0^\infty(\Om_T)$, taking $s\phi$ instead of $\phi$ in~\ref{defparmin}, we obtain

\begin{align}
    \bigg| \int_0^T \int_{\Om} u\cdot\de_t\phi &\,dx\,dt\bigg| \leq \nonumber\\
    &\left| \int_0^T\iint\limits_{\mathbb{R}^N\times\mathbb{R}^N} \frac{1}{s}\frac{H(x,y,(u+s\phi)(x,t)-(u+s\phi)(y,t))-H(x,y,u(x,t)-u(y,t))}{|x-y|^N} \right|
\end{align}
By passing to the limit as $s\to 0$, we get 
\begin{align*}
    \left| \int_0^T \int_{\Om} u\cdot\de_t\phi \,dx\,dt\right| & \leq \int_0^T\iint\limits_{\mathbb{R}^N\times\mathbb{R}^N} \frac{\left|\de_{\xi}H(x,y,u(x,t)-u(y,t))\right|\left|(\phi(x,t)-\phi(y,t))\right|}{|x-y|^N}\,dx\,dy\,dt\\
    & \leq C\int_0^T\iint\limits_{\mathbb{R}^N\times\mathbb{R}^N} \frac{|u(x,t)-u(y,t)|^{p-1}\left|(\phi(x,t)-\phi(y,t))\right|}{|x-y|^{N+sp}}\,dx\,dy\,dt\\
    & \leq C \norm{u}_{L^p(0,T;W^{s,p}(\RR^N))}^{p-1}\norm{\phi}_{L^p(0,T;W^{s,p}(\RR^N))}\\
    & \leq C \norm{u}_{L^p(0,T;W^{s,p}(\RR^N))}^{p-1}\norm{\phi}_{L^p(0,T;W^{s,p}_0(\Om))}.
\end{align*}

In the second inequality, we have used the fact that if $H$ satisfies~\ref{eq:bound_below_H} and \ref{eq:bound_above_H}, then $\de_{\xi}H$ satisfies \begin{align*}\de_\xi H(x,y,\xi)\leq C\frac{|\xi|^{p-1}}{|x-y|^{sp}}.\end{align*} For a proof, see~\cite[Lemma~2.2]{marcelliniRegularityMinimizersIntegrals1989}. In the third inequality, we have used H\"older's inequality. Now, since $C_0^{\infty}(\Om_T)$ is dense in $L^p(0,T;W^{s,p}_0(\Om))$, we get that $\de_t u \in L^{p'}(0,T;(W^{s,p}_0(\Om))^*)$.

\subsection{Passage to weak solutions} In this subsection, we prove that under~\cref{eq:bound_above_H}, variational solutions are weak solutions as well. Once again, we use the definition of parabolic minimizers as it is more convenient. In particular, taking $s\phi$ instead of $\phi$ in Definition~\ref{defparmin}, and taking limit as $s\to 0$, we get 

\begin{align*}
    -\int_0^T \int_\Om \de_t u\, \phi\,dx\,dt = \int_0^T \int_\Om & u\,\de_t\phi\,dx\,dt\\
    &\qquad \leq \int_0^T \iint\limits_{\mathbb{R}^N\times\mathbb{R}^N} \frac{\de_{\xi}H(x,y,u(x,t)-u(y,t))(\phi(x,t)-\phi(y,t))}{|x-y|^N}\,dx\,dy\,dt.
\end{align*}

If we take $-s\phi$ instead of $\phi$, we get the reverse inequality. This completes the proof.

\subsection{Passage from weak to variational} 

We will now show that for the functional,
\begin{equation}\label{eq:double_phase_H}
    H(x,y,\xi) = \frac{1}{p}\left(\frac{|\xi|}{|x-y|^s}\right)^p
\end{equation}
any weak solution with $\de_t u\in L^{p'}(0,T;(W^{s,p}_0(\Om))^*)$ must necessarily be a variational solution in one of two cases, either we have $p>\frac{2N}{2s+N}$ or we have $\de_t u\in L^2(\Om_T)$. 

The condition on time derivative $\de_t u\in L^{p'}(0,T;(W^{s,p}_0(\Om))^*)$ is always satisfied as per the standard theory of monotone operators in Banach space ~\cite{stromqvistLocalBoundednessSolutions2019}.  To proceed with the argument, let $\phi = v-u$ be our test function where $v$ is a comparison function. The function $\phi$ is a valid test function since $C_c^\infty(\Om_T)$ is dense in $L^p(0,T;W^{s,p}_0(\Om))$. We note that
    \begin{align}\label{onest0}
    |(u+\phi)(x) - (u+\phi)(y)|^p - |u(x)-u(y)|^p \geq p |u(x) - u(y)|^{p-2}(u(x) - u(y))(\phi(x)-\phi(y))
    \end{align}
and compute 
\begin{align}\label{oneest1}
  \int_0^T \inp{\de_t u}{\phi} &=
  \int_0^T \inp{\de_t v}{v-u} - \int_0^T \inp{\de_t (v-u)}{v-u}  \nonumber\\
  &= \int_0^T \inp{\de_t v}{v-u} - \frac{1}{2}\int_0^T \frac{d}{dt}\norm{v-u}_{L^2}^2 \nonumber\\
  &= \int_0^T \inp{\de_t v}{v-u} - \norm{(v-u)(\cdot,T)}_{L^2}^2 + \norm{(v-u)(\cdot,0)}_{L^2}^2 .
\end{align}
The second equality holds in one of two cases. Either $p>\frac{2N}{2s+N}$ which ensures the existence of the Gelfand triple $W_0^{s,p}(\Om)\xhookrightarrow{}L^2(\Om)\xhookrightarrow{}W^{-s,p'}(\Om)$, so that \cref{integratebypartsintime} can be applied. Or we assume that the weak solution $u$ satisfies $\de_t u\in L^2(\Om_T)$.

Now, by the definition of weak solution \cref{defweak}, \cref{oneest1} and applying \cref{onest0}, we obtain that
\begin{align*}
\int_0^T \inp{\de_t v}{v-u} - \norm{(v-u)(\cdot,T)}_{L^2}^2 + \norm{(v-u)(\cdot,0)}_{L^2}^2 \geq \int_{0}^{T}\iint\limits_{\mathbb{R}^N\times\mathbb{R}^N}\frac{H(x,y,u(x,t)-u(y,t))}{|x-y|^{N}} \,dx \, dy\, dt \\
-\int_{0}^{T}\iint\limits_{\mathbb{R}^N\times\mathbb{R}^N}\frac{H(x,y,v(x,t)-v(y,t))}{|x-y|^{N}} \,dx \, dy\, dt.  
\end{align*}

Rearranging the above inequality shows that $u$ is a variational solution.
\appendix
\section{Mollification in Time}\label{Molli}
\renewcommand{\thesection}{\Alph{section}}
In the definition of variational solutions, the test functions or comparison functions have additional time regularity compared to the solutions, therefore the variational solutions themselves cannot be used as comparison functions. To fix this, we need a smoothening in time. Let $\Omega$ be an open subset of $\mathbb{R}^N$. For $T>0$, $v\in L^1(\Omega_T)$, $v_0\in L^1(\Omega)$ and $h\in (0,T]$, we define
\begin{align}
    \label{timemolli}
    [v]_h(\cdot,t)=e^{-\frac{t}{h}}v_0 + \frac{1}{h}\int_0^t e^{\frac{s-t}{h}}v(\cdot,s)\,ds,
\end{align} for $t\in [0,T]$. The basic properties of time mollification were proved earlier in~\cite{kinnunenPointwiseBehaviourSemicontinuous2006,bogeleinParabolicSystemsQGrowth2013}. We state them below for easy reference and prove the ones that are new.

\begin{proposition}(\cite[Lemma~B.1]{bogeleinParabolicSystemsQGrowth2013})\label{generalmolliX}
Let $X$ be a Banach space and assume that $v_0\in X$, and moreover $v\in L^r(0,T;X)$ for some $1\leq r\leq\infty$. Then, the mollification in time defined by~\eqref{timemolli} belongs to $L^r(0,T;X)$ and
\begin{align}
    ||[v]_h||_{L^r(0,T;X)}\leq ||v||_{L^r(0,t_0;X)}+ \left(\frac{h}{r}\left(1-e^{-\frac{t_0 r}{h}}\right)\right)^{\frac{1}{r}}||v_0||_{X},
\end{align} for any $t_0\in(0,T)$. Moreover, we have
\begin{align}
    \de_t[v]_h\in L^r(0,T;X)\mbox{ and } \de_t[v]_h=-\frac{1}{h}([v]_h-v).
\end{align}
\end{proposition}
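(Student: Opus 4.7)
The plan is to establish the three assertions of Proposition~\ref{generalmolliX} in sequence: the $L^r$ bound on $[v]_h$, the differentiation identity $\de_t [v]_h = -\frac{1}{h}([v]_h - v)$, and finally the $L^r$ membership of $\de_t[v]_h$. The first two are independent; the third will follow trivially from them.

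For the norm bound, I would split $[v]_h$ into its transient piece $e^{-t/h}v_0$ and the convolution piece $J(t) := \frac{1}{h}\int_0^t e^{(s-t)/h}v(\cdot,s)\,ds$, and estimate each separately. The transient piece is handled by a direct calculation: its $L^r(0,t_0;X)$-norm equals $\|v_0\|_X \bigl(\int_0^{t_0} e^{-rt/h}\,dt\bigr)^{1/r} = \bigl(\tfrac{h}{r}(1-e^{-rt_0/h})\bigr)^{1/r}\|v_0\|_X$, which is exactly the second term on the right of the claimed inequality. For $J$, I would extend $v$ by zero to all of $\mathbb{R}$ and rewrite $J(t) = (k_h * \tilde v)(t)$ with the probability kernel $k_h(u) = \tfrac{1}{h}e^{-u/h}\mathbf{1}_{\{u \geq 0\}}$, which has $\|k_h\|_{L^1(\mathbb{R})} = 1$. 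The Banach-space-valued Young convolution inequality then gives $\|J\|_{L^r(\mathbb{R};X)} \leq \|k_h\|_{L^1}\|\tilde v\|_{L^r(\mathbb{R};X)} = \|v\|_{L^r(0,T;X)}$. Applying the triangle inequality in $L^r$ yields the stated bound.

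For the identity $\de_t [v]_h = -\frac{1}{h}([v]_h - v)$, I would rewrite \eqref{timemolli} in the convenient form
\[
[v]_h(\cdot,t) = e^{-t/h}\Bigl(v_0 + \frac{1}{h}\int_0^t e^{s/h}v(\cdot,s)\,ds\Bigr),
\]
and differentiate in $t$ via the product rule together with the fundamental theorem of calculus. Differentiating the exponential prefactor produces $-\tfrac{1}{h}[v]_h$, while differentiating the inner integral contributes $e^{-t/h}\cdot\tfrac{1}{h}e^{t/h}v(\cdot,t) = \tfrac{1}{h}v(\cdot,t)$. Adding gives the desired identity. For $v \in L^r(0,T;X)$ this holds a.e.\ in $t$ by Lebesgue differentiation (at $t$-Lebesgue points) and globally in the distributional sense, via approximation by smooth $X$-valued functions, which are dense in $L^r(0,T;X)$ when $X$ is separable (the setting in which Bochner integration is most transparent).

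The membership $\de_t [v]_h \in L^r(0,T;X)$ then follows immediately: the identity expresses $\de_t[v]_h$ as a linear combination of $[v]_h \in L^r(0,T;X)$ (from the first step) and $v \in L^r(0,T;X)$ (by hypothesis). The only substantive technical point is the Banach-space-valued Young inequality, which is classical; beyond that, no serious obstacle arises, and the whole argument is a careful bookkeeping of Minkowski's integral inequality together with elementary calculus on $(0,T)$.
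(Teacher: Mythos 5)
Your approach is sound, and since the paper merely cites \cite[Lemma~B.1]{bogeleinParabolicSystemsQGrowth2013} without reproducing a proof, there is no in-paper argument to compare against; what you propose (decomposition into the transient term and a causal convolution against the $L^1$-normalized kernel $k_h$, Young's inequality, then the product rule plus the fundamental theorem of calculus for the ODE identity) is exactly the standard route.

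Two bookkeeping points deserve tightening. First, your Young-inequality step only gives $\|J\|_{L^r(\mathbb{R};X)} \le \|v\|_{L^r(0,T;X)}$, whereas the stated bound requires $\|v\|_{L^r(0,t_0;X)}$ on the right. This is repaired by the causality observation that for $t \le t_0$ the value $J(t)$ depends only on $v|_{(0,t)}\subset v|_{(0,t_0)}$, so one may convolve $k_h$ against $v\,\mathbf{1}_{(0,t_0)}$ instead of $\tilde v$ without changing $J$ on $(0,t_0)$, giving $\|J\|_{L^r(0,t_0;X)}\le\|v\|_{L^r(0,t_0;X)}$. Second, and relatedly, the displayed inequality in the paper has $\|[v]_h\|_{L^r(0,T;X)}$ on the left but $\|v\|_{L^r(0,t_0;X)}$ on the right; the left-hand side should read $\|[v]_h\|_{L^r(0,t_0;X)}$ (as in the cited reference), which is precisely what your argument proves. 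As stated, with $T$ on the left, the inequality is false for $t_0<T$: $J$ on $(t_0,T)$ genuinely depends on $v$ beyond $(0,t_0)$, so the right-hand side cannot control it. Aside from this, the only loose thread is the interpretation of $\bigl(\tfrac{h}{r}(1-e^{-t_0 r/h})\bigr)^{1/r}$ as $1$ in the limit $r=\infty$ (the transient term there just contributes $\|v_0\|_X$), and the standard caveat that density of smooth $X$-valued functions in $L^r(0,T;X)$ holds for $r<\infty$; for the distributional identity it suffices anyway that both sides are measurable and the integrated form follows from Fubini. The final conclusion $\de_t[v]_h\in L^r(0,T;X)$ indeed drops out of the identity together with the first bound.
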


\begin{lemma}(\cite[Lemma~2.2]{bogeleinExistenceEvolutionaryVariational2014})
\label{propertiesofmolli}
Let $\Omega$ be an open subset of $\mathbb{R}^N$. Suppose that $v\in L^1(\Omega_T)$ and $v_0\in L^1(\Omega)$. Then, the mollification in time as defined in~\eqref{timemolli} satisfies the following properties:
\begin{itemize}
    \item[(i)] Assume that $v\in L^p(\Omega_T)$ and $v_0\in L^p(\Omega)$ for some $p\geq 1$. Then, it holds true that $[v]_h\in L^p(\Omega_T)$ and the following quantitative bound holds.
    \begin{align}
    ||[v]_h||_{L^p(\Omega_T)}\leq ||v||_{L^p(\Omega_T)}+ h^{1/p}||v_0||_{L^p(\Omega)}.
\end{align}
Moreover, $[v]_h\to v$ in $L^p(\Omega_T)$ as $h\to 0$.
\item[(ii)] Assume that $v\in L^p(0,T;W^{s,p}(\Omega))$ and $v_0\in W^{s,p}(\Omega)$ for some $p > 1$ and $s\in (0,1]$. Then, it holds true that $[v]_h\in L^p(0,T;W^{s,p}(\Omega))$ and the following quantitative bound holds.
    \begin{align}
    ||[v]_h||_{L^p(0,T;W^{s,p}(\Omega))}\leq ||v||_{L^p(0,T;W^{s,p}(\Omega))}+ h^{1/p}||v_0||_{W^{s,p}(\Omega)}.
\end{align}
Moreover, $[v]_h\to v$ in $L^p(0,T;W^{s,p}(\Omega))$ as $h\to 0$.
\item[(iii)] Suppose that $v\in L^p(0,T;W^{s,p}_0(\Omega))$ and $v_0\in W^{s,p}_0(\Omega)$ for some $p > 1$ and $s\in (0,1]$. Then, it holds true that $[v]_h\in L^p(0,T;W^{s,p}_0(\Omega))$.
\item[(iv)] Suppose that $v\in C^0(0,T;L^2(\Omega))$ and $v_0\in L^2(\Omega)$. Then, it holds true that $[v]_h\in C^0(0,T;L^2(\Omega))$, $[v]_h(\cdot,0)=v_0$. Moreover, $[v]_h\to v$ in $C^0(0,T;L^2(\Omega))$ as $h\to 0$.
\item[(v)] Suppose that $v\in L^\infty(0,T;L^2(\Omega))$ and $v_0\in L^2(\Omega)$. Then, it holds true that $\de_t [v]_h\in L^\infty(0,T;L^2(\Omega))$. Moreover, $\de_t [v]_h=-\frac{1}{h}([v]_h-v)$.
\item[(vi)] Suppose that $\de_t v\in L^2(\Omega_T)$ then $\de_t [v]_h\to \de_t v$ in $L^2(\Omega_T)$ as $h\to 0$. Moreover, the inequality $||\de_t [v]_h||_{L^2(\Omega_T)}\leq ||\de_t v||_{L^2(\Omega_T)}$ holds true.
\end{itemize}
\end{lemma}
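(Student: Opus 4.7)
The plan is to derive all six parts from the explicit formula~\cref{timemolli} together with the general embedding statement in Proposition~\ref{generalmolliX}. Parts (i), (ii), and (iii) would follow by applying Proposition~\ref{generalmolliX} to the Banach space $X = L^p(\Omega)$, $X = W^{s,p}(\Omega)$, and $X = W^{s,p}_0(\Omega)$ respectively; the quantitative constants collapse to $h^{1/p}$ since $1 - e^{-Tp/h} \leq 1$ and $p^{-1/p} \leq 1$ for $p\geq 1$. In (iii), preservation of the zero-trace condition is immediate because $[v]_h(\cdot, t)$ is a continuous convex combination of $v_0$ and the slices $v(\cdot, s)$, all lying in the closed subspace $W^{s,p}_0(\Omega)$ of $W^{s,p}(\mathbb{R}^N)$.

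The strong convergences $[v]_h \to v$ in parts (i), (ii), and (iv) would be handled by a standard density argument: on $C([0,T]; X)$ with $v(0) = v_0$, the error is controlled by the modulus of continuity of $v$ in time against the exponential approximate-identity kernel, and the uniform bounds from the previous step propagate the convergence to arbitrary $v$ in the relevant Bochner space. Part (iv) additionally uses the explicit formula to read off $[v]_h(\cdot, 0) = v_0$ and continuity of $t \mapsto [v]_h(\cdot,t)$ on $[0, T]$.

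Parts (v) and (vi) rest on differentiating~\cref{timemolli} in $t$, which gives
\[
\partial_t [v]_h = -\tfrac{1}{h}\bigl([v]_h - v\bigr),
\]
from which the $L^\infty(0, T; L^2)$ bound in (v) is immediate. For (vi), assuming the natural compatibility $v(\cdot, 0) = v_0$ that holds in every use of the mollification in the paper, integration by parts in the convolution integral yields
\[
\partial_t [v]_h(\cdot, t) = \frac{1}{h}\int_0^t e^{(s-t)/h}\,\partial_s v(\cdot, s)\,ds,
\]
exhibiting $\partial_t [v]_h$ as the mollification of $\partial_s v$ with zero initial data. Part (i) applied to $\partial_s v$ then furnishes both the bound $\|\partial_t[v]_h\|_{L^2(\Omega_T)} \leq \|\partial_t v\|_{L^2(\Omega_T)}$ and the convergence $\partial_t [v]_h \to \partial_t v$ in $L^2(\Omega_T)$.

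The main obstacle I anticipate is part (vi): the integration-by-parts representation genuinely requires the compatibility $v(\cdot, 0) = v_0$, without which a residual term $-\frac{1}{h}e^{-t/h}(v_0 - v(\cdot, 0))$ would concentrate near $t = 0$ and obstruct any $L^2$ convergence of the time derivatives. Since every application of the lemma within the paper chooses $v_0$ as the prescribed initial trace of $v$, this compatibility is automatic, and no further technicalities are needed.
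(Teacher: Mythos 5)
Your plan is essentially sound and does produce a proof of the lemma, but for part (ii) you take a genuinely different route than the paper, and this is worth flagging. You propose to treat $W^{s,p}(\Omega)$ as an abstract separable Banach space and run a density argument: approximate $v$ by $v_\epsilon\in C([0,T];W^{s,p}(\Omega))$ with $v_\epsilon(0)=v_0$, get uniform convergence $[v_\epsilon]_h\to v_\epsilon$ from the modulus of continuity, and use the uniform $L^p$-contraction $\|[w]_h\|_{L^p(0,T;X)}\le\|w\|_{L^p(0,T;X)}$ (zero initial data) to propagate. This works. The paper instead avoids the abstract density step entirely and argues at the level of the Gagliardo integrand: writing $f_h(x,y,t)=|[v]_h(x,t)-[v]_h(y,t)|/|x-y|^{N/p+s}$, they note that $[v]_h(\cdot,t)$ is a weighted average against the probability measure $e^{-t/h}\delta_0 + \tfrac1h e^{(s-t)/h}\,ds$ on $[0,t]$, apply Jensen to the convex map $\xi\mapsto|\xi|^p/|x-y|^{N+ps}$ to get the pointwise domination $f_h^p(x,y,t)\le\bigl[\,|v(x,\cdot)-v(y,\cdot)|^p/|x-y|^{N+ps}\,\bigr]_h$, invoke part (i) applied to the \emph{scalar} integrand to conclude $L^1$-convergence of the dominating sequence, and finish by a generalized dominated convergence theorem using the pointwise a.e.\ convergence of $f_h^p$. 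Neither route is wrong, but the paper's Jensen/domination argument is the one that scales: it is precisely the mechanism recycled in Theorem~\ref{convergenceoffunctionals} to handle the general convex functional $H$, where no Banach-space structure is available to make a density argument run. Your density argument is confined to the Sobolev seminorm and would not give you Theorem~\ref{convergenceoffunctionals} as a corollary. For part (iii), you use closedness under Bochner averaging of the closed subspace $W^{s,p}_0(\Omega)$; the paper uses density of $C_0^\infty(\Omega)$ in $W^{s,p}_0(\Omega)$ and commutes the approximation with $[\cdot]_h$—again both fine. Your observation about part (vi) is also correct: the integration-by-parts identity $\partial_t[v]_h=[\partial_s v]_h$ (with zero initial datum on the right) and hence both the bound $\|\partial_t[v]_h\|_{L^2}\le\|\partial_t v\|_{L^2}$ and the convergence do require the compatibility $v(\cdot,0)=v_0$; otherwise the residual $-\tfrac1h e^{-t/h}(v_0-v(\cdot,0))$ has $L^2(\Omega_T)$-norm of order $h^{-1/2}$ and ruins both claims. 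The paper inherits this hypothesis silently from the cited reference, and every invocation within the paper indeed takes $v_0$ to be the trace of $v$ at $t=0$.
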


\begin{proof}
The proofs of statements $(i), (iv), (v)$ and $(vi)$ are the same as in~\cite[Lemma~B.2]{bogeleinParabolicSystemsQGrowth2013}. We indicate the modification required for $(ii)$ and $(iii)$.
\paragraph{Proof of $(ii)$} The case $s=1$ is covered in~\cite[Lemma~B.2]{bogeleinParabolicSystemsQGrowth2013}. Therefore, we assume that $s\in (0,1)$. To begin with, $[v]_h\in L^p(0,T;W^{s,p}(\Omega))$ follows from Proposition~\ref{generalmolliX}. To prove the convergence, observe that the inclusion $[v]_h\in L^p(0,T;W^{s,p}(\Omega))$ implies that  the function defined by $f_h(x,y,t)=\frac{|[v]_h(x,t)-[v]_h(y,t)|}{|x-y|^{\frac{N}{p}+s}}$ satisfies $f_h\in L^p(0,T;L^p(\Omega\times\Omega))$. 

The convergence of $[v]_h$ to $v$ in $L^p(0,T;\RR^N)$ as $h\to 0$ guarantees pointwise a.e. convergence for a subsequence. Since this is true for any subsequence, we get the convergence $f_h(x,y,t)^p\to \dfrac{|v(x,t)-v(y,t)|^p}{|x-y|^{N+ps}}$ as $h\to 0$ pointwise a.e. $x,t\in\RR^N\times (0,T)$ due to continuity of $f_h$ in $v$.

Now, we will interpret the mollification as a mean with respect to the measure $e^{\frac{s-t}{h}}\,ds$ similar to \cite[Lemma~2.3]{bogeleinExistenceEvolutionaryVariational2014}. This will allow us to use the convexity of $f_h^p$ as the $p^{th}$ power of the ratio $\frac{|[v]_h(x,t)-[v]_h(y,t)|}{|x-y|^{\frac{N}{p}+s}}$. 

Observe the following chain of inequalities:
\begin{align*}
    f_h^p(x,y,t)&=\frac{|[v]_h(x,t)-[v]_h(y,t)|^p}{|x-y|^{N+sp}}=\frac{\left|e^{-t/h}(v_0(x)-v_0(y))+\frac{1-e^{-t/h}}{h(1-e^{-t/h})}\int_{0}^{t}(v(x,t)-v(y,t))e^{\frac{s-t}{h}\,ds}\right|^p}{|x-y|^{N+ps}}\\
    &\leq e^{-t/h}\frac{\left|v_0(x)-v_0(y)\right|^p}{|x-y|^{N+ps}}+\left(1-e^{-t/h}\right)\frac{\left|\frac{1}{h(1-e^{-t/h})}\int_{0}^{t}(v(x,t)-v(y,t))e^{\frac{s-t}{h}\,ds}\right|^p}{|x-y|^{N+ps}}\\
    &\leq e^{-t/h}\frac{\left|v_0(x)-v_0(y)\right|^p}{|x-y|^{N+ps}}+\frac{1}{h}\frac{\int_{0}^{t}\left|(v(x,t)-v(y,t))e^{\frac{s-t}{h}}\right|^p\,ds}{|x-y|^{N+ps}}
    =\left[\frac{|v(x,t)-v(y,t)|^p}{|x-y|^{N+ps}}\right]_h,
\end{align*} where we have used the Jensen's inequality in the last inequality.

Since $\frac{|v(x,t)-v(y,t)|^p}{|x-y|^{N+ps}}\in L^1(0,T;L^1(\Omega\times\Omega))$ and $\frac{|v_0(x)-v_0(y)|^p}{|x-y|^{N+ps}}\in L^1(\Omega\times\Omega)$, we conclude that \[\left[\frac{|v(x,t)-v(y,t)|^p}{|x-y|^{N+ps}}\right]_h\in L^1(0,T;L^1(\Omega\times\Omega))\] by $(i)$ with the additional bound
\begin{align*}
    \left|\left|\left[\frac{|v(x,t)-v(y,t)|^p}{|x-y|^{N+ps}}\right]_h\right|\right|_{L^1(0,T;L^1(\Omega\times\Omega))}\leq& \left|\left|\frac{|v(x,t)-v(y,t)|^p}{|x-y|^{N+ps}}\right|\right|_{L^1(0,T;L^1(\Omega\times\Omega))}\\
    &\qquad+h\left|\left|\frac{|v_0(x)-v_0(y)|^p}{|x-y|^{N+ps}}\right|\right|_{L^1(\Omega\times\Omega)}.
\end{align*}

Since $h\left|\left|\frac{|v_0(x)-v_0(y)|^p}{|x-y|^{N+ps}}\right|\right|_{L^1(\Omega\times\Omega)}\to 0$ as $h\to 0$, by a version of dominated convergence theorem, we conclude that
\begin{align*}
    \lim_{h\to 0}\int_0^T\iint\limits_{\Om\times\Om} f^p_h(x,y,t)\,dx\,dy\,dt= \int_0^T\iint\limits_{\Om\times\Om}\frac{|v(x,t)-v(y,t)|^p}{|x-y|^{N+ps}}\,dx\,dy\,dt.
\end{align*} This finishes the proof of $(ii)$.
\paragraph{Proof of $(iii)$} The statement may be proved by density of $C_0^\infty(\Omega)$ functions in $W^{s,p}_0(\Omega)$. In particular, if $\phi_{\varepsilon}\in L^p(0,T;C_0^\infty(\Omega))$ is an approximating sequence for a function $v\in L^p(0,T;W^{s,p}(\Omega))$, then $[\phi_{\varepsilon}]_h$ is an approximating sequence for the function $[v]_h\in L^p(0,T;W^{s,p}(\Omega))$.
\end{proof}

We finish this appendix by proving the following theorem.

\begin{theorem}
\label{convergenceoffunctionals}
Let $T>0$, and assume that $v\in L^1(0,T;W^{s,1}(\mathbb{R}^N))$ with
\begin{align*}
    \frac{H(x,y,v(x,t)-v(y,t))}{|x-y|^N}\in L^1(0,T;L^1(\mathbb{R}^N\times\mathbb{R}^N)),
\end{align*} and $v_0\in W^{s,1}(\mathbb{R}^N)$, with
\begin{align*}
    \frac{H\left(x,y,v_0(x)-v_0(y)\right)}{|x-y|^N}\in L^1(\mathbb{R}^N\times\mathbb{R}^N).
\end{align*} Then, we have
\begin{align*}
    \frac{H\left(x,y,[v]_h(x,t)-[v]_h(y,t)\right)}{|x-y|^N}\in L^1(0,T;L^1(\mathbb{R}^N\times\mathbb{R}^N)).
\end{align*}
Moreover,
\begin{align*}
    \lim_{h\to 0}\int_0^T\iint\limits_{\mathbb{R}^N\times\mathbb{R}^N}\frac{H\left(x,y,[v]_h(x,t)-[v]_h(y,t)\right)}{|x-y|^N}\,dx\,dy\,dt=\int_0^T\iint\limits_{\mathbb{R}^N\times\mathbb{R}^N}\frac{H\left(x,y,v(x,t)-v(y,t)\right)}{|x-y|^N}\,dx\,dy\,dt
\end{align*}
\end{theorem}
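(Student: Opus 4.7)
The plan is to follow the same strategy used in the proof of part $(ii)$ of Lemma~\ref{propertiesofmolli}, replacing the convex integrand $\tfrac{|v(x,t)-v(y,t)|^p}{|x-y|^{N+ps}}$ by the general convex quantity $\tfrac{H(x,y,v(x,t)-v(y,t))}{|x-y|^N}$. The convexity of $H$ in $\xi$ is exactly what is needed to dominate the mollified integrand by a mollification of the original integrand.

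\textbf{Step 1 (pointwise Jensen majorization).} First I would interpret $[v]_h(x,t)$ as a probability average: write it as the convex combination with weight $e^{-t/h}$ on $v_0(x)$ and weight $1-e^{-t/h}$ on the normalized integral of $v(x,\cdot)$ against the measure $d\mu_t(s)=\tfrac{e^{(s-t)/h}}{h(1-e^{-t/h})}\,ds$ on $(0,t)$. Applying convexity of $\xi\mapsto H(x,y,\xi)$ together with Jensen's inequality for the average part yields the pointwise bound
\begin{equation*}
\frac{H(x,y,[v]_h(x,t)-[v]_h(y,t))}{|x-y|^N}\;\leq\;\left[\frac{H(x,y,v(x,\cdot)-v(y,\cdot))}{|x-y|^N}\right]_h(t),
\end{equation*}
where the mollification on the right is taken with initial datum $\tfrac{H(x,y,v_0(x)-v_0(y))}{|x-y|^N}$.

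\textbf{Step 2 ($L^1$ membership).} Setting $g(x,y,t):=\tfrac{H(x,y,v(x,t)-v(y,t))}{|x-y|^N}$ and $g_0(x,y):=\tfrac{H(x,y,v_0(x)-v_0(y))}{|x-y|^N}$, both of which lie in $L^1$ by hypothesis, Lemma~\ref{propertiesofmolli}$(i)$ applied with $p=1$ gives $[g]_h\in L^1$ along with $\|[g]_h\|_{L^1}\le \|g\|_{L^1}+h\|g_0\|_{L^1}$ and $[g]_h\to g$ in $L^1$ as $h\to 0$. Combined with Step~1, this establishes the first conclusion.

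\textbf{Step 3 (pointwise a.e.\ convergence).} Since $v\in L^1(0,T;W^{s,1}(\mathbb{R}^N))$ implies $v\in L^1(\mathbb{R}^N\times(0,T))$, Lemma~\ref{propertiesofmolli}$(i)$ yields $[v]_h\to v$ in $L^1$, hence along a subsequence pointwise a.e. The Carath\'eodory property \eqref{eq: cvx_H} of $H$ then gives
\begin{equation*}
\frac{H(x,y,[v]_h(x,t)-[v]_h(y,t))}{|x-y|^N}\;\longrightarrow\;\frac{H(x,y,v(x,t)-v(y,t))}{|x-y|^N}\quad\text{a.e.}
\end{equation*}

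\textbf{Step 4 (convergence of the integrals).} Denoting by $f_h$ the left-hand side of the display in Step~1, Fatou's lemma applied to the nonnegative $f_h$ gives $\int g\le\liminf_{h\to 0}\int f_h$, while the bound $f_h\le[g]_h$ from Step~1 combined with the $L^1$ convergence $\int[g]_h\to\int g$ from Step~2 yields $\limsup_{h\to 0}\int f_h\le\int g$. A standard subsequence argument promotes the resulting equality to the convergence of the whole family as $h\to 0$. The only mildly delicate point is Step~1, but this is exactly the maneuver already executed in the proof of Lemma~\ref{propertiesofmolli}$(ii)$, and the remainder is routine.
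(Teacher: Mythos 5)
Your proposal is correct and tracks precisely the route the paper sketches: the paper's proof of Theorem~\ref{convergenceoffunctionals} simply points to Lemma~\ref{propertiesofmolli}\,(ii) and \cite[Lemma~2.3]{bogeleinExistenceEvolutionaryVariational2014}, saying the argument is identical once the convex $p$-power integrand is replaced by the convex Carath\'eodory function $H$. Your Step~1 is the convexity/Jensen majorization $\tfrac{H(x,y,[v]_h(x,t)-[v]_h(y,t))}{|x-y|^N}\le\bigl[\tfrac{H(x,y,v(x,\cdot)-v(y,\cdot))}{|x-y|^N}\bigr]_h$, which is exactly the pointwise inequality the paper establishes in the proof of (ii); your Steps~2--4 (the $L^1$ bound from part~(i), a.e.\ convergence along a subsequence from $L^1$ convergence and the Carath\'eodory property, and the Fatou-plus-domination passage to the limit of integrals) together constitute the ``version of the dominated convergence theorem'' the paper invokes, so the proposal is a fully spelled-out version of the paper's own argument.
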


\begin{proof}
The proof is the same as that of \cite[Lemma~2.3]{bogeleinExistenceEvolutionaryVariational2014}. The only difference is we use continuity and convexity of $\xi\to H(x,y,\xi)$ and use convergence in $L^1(0,T;L^1(\mathbb{R}^N\times\mathbb{R}^N))$. Indeed, the proof is similar to the one given for Lemma~\ref{propertiesofmolli} (ii).
\end{proof}

\section*{Acknowledgements} 
The authors would like to thank Karthik Adimurthi for introducing us to this subject and for illuminating discussions. The authors were supported by the Department of Atomic Energy,  Government of India, under	project no.  12-R\&D-TFR-5.01-0520. 

\bibliography{MyLibrary}

\end{document}